\author[P.~Leonetti]{Paolo Leonetti}
\address{
Universit\'a degli Studi dell'Insubria, via Monte Generoso 71, 21100 Varese, Italy}
\email{leonetti.paolo@gmail.com}
\urladdr{\url{https://sites.google.com/site/leonettipaolo/}} 
\keywords{Rough convergence, rough family, ideal convergence, 
%ideal core, 
rough ideal cluster points, rough ideal limit points.}
\subjclass[2010]{Primary: 40A35. Secondary: 54A20.}
\title{On Rough Ideal Convergence}
   \def\MR#1{}
\newtheorem{thm}{Theorem}[section]
\newtheorem{cor}[thm]{Corollary}%[section]
\newtheorem{lem}[thm]{Lemma}
\newtheorem{prop}[thm]{Proposition}
\theoremstyle{definition} 
\newtheorem{defi}[thm]{Definition}%[section]
\let\olddefi\defi
\renewcommand{\defi}{\olddefi\normalfont}
\newtheorem{example}[thm]{Example}
\let\oldexample\example
\renewcommand{\example}{\oldexample\normalfont}
\newtheorem{rmk}[thm]{Remark}
\let\oldrmk\rmk
\renewcommand{\rmk}{\oldrmk\normalfont}
\newtheorem{claim}{\textsc{Claim}}
\providecommand{\MR}[1]{}
\providecommand{\MR}{\relax\ifhmode\unskip\space\fi MR }
\begin{document}

\maketitle
\thispagestyle{empty}

\begin{abstract}
We continue the study of ideal convergence for 
%We study ideal convergence of 
sequences $(x_n)$ with values in a topological space $X$ with respect to a family $\{F_\eta:\eta\in X\}$ of subsets of $X$ with $\eta\in F_\eta$, 
where each $F_\eta$ measures the allowed ``roughness'' of convergence toward $\eta$. 

More precisely, after introducing the corresponding notions of cluster and limit points, we prove several inclusion and invariance properties, discuss their structural properties, and give examples showing that the rough notions are genuinely different from the classical ideal ones. 
\end{abstract}

%%%%%%%%%%%%%%%%%%%%%%%%%%%%%%%%%%%%%%%%%%%%%%%%%%%%%%%%%%%%%%%%%%%%%%%%%%%%

\section{Introduction}\label{sec:intro}

In this work, 
which can be thought of as a continuation of \cite{LeonettiJCA}, 
we study the structural properties of cluster points and limit points in the context of rough ideal convergence for sequences taking values in topological spaces. 
As remarked in \cite{MR5014948}, the theory of ideals has many applications in infinite combinatorics, and it is an important part of set theory. It also has some relevant potential for the study of the geometry of Banach spaces \cite{MR4124855, MR3436368}, and it can be considered a source of useful examples \cite{Miek}. Hopefully, the addition of \textquotedblleft a degree of freedom,\textquotedblright\, namely, the rough families below, will be relevant for the same motivations as the classical theory of ideal convergence. To be more precise, we start by recalling the necessary definitions and the notion of rough ideal convergence.

Let 
$\mathcal{I}\subseteq \mathcal{P}(\omega)$ be an ideal on the nonnegative integers $\omega$, that is, a family closed under subsets and finite unions. 
Unless otherwise stated, 
it is also assumed that the family of finite subsets of $\omega$, denoted by $\mathrm{Fin}$, is contained in $\mathcal{I}$ and that $\omega \notin \mathcal{I}$. We also write $\mathcal{I}^+:=\mathcal{P}(\omega)\setminus \mathcal{I}$  for its family of $\mathcal{I}$-positive sets. 
An ideal $\mathcal{I}$ is said to be a $P$\emph{-ideal} if for every increasing sequence $(S_j: j \in \omega)$ in $\mathcal{I}$ there exists $S \in \mathcal{I}$ such that $S_j\setminus S$ is finite for all $j \in \omega$. 
Identifying $\mathcal{P}(\omega)$ with the Cantor space $\{0,1\}^\omega$, it is possible to speak about the topological complexity of ideals; 
for instance, the family $\mathcal{Z}$ of asymptotic density zero sets, that is, $\mathcal{Z}:=\{S\subseteq \omega: |S\cap [0,n]|=o(n) \text{ as }n\to \infty\}$, is an $F_{\sigma\delta}$ $P$-ideal, while $\mathrm{Fin}$ is an $F_\sigma$ $P$-ideal. We refer to \cite{MR1711328} for an introduction to the theory of ideals. 

In the following definitions, 
$X$ is a given topological space with topology $\tau$, and $\mathscr{F}:=\{F_\eta: \eta \in X\}$ is a \emph{rough family} on $X$, that is, a collection of subsets of $X$ with the property that $\eta \in F_\eta$ for all $\eta \in X$. We denote the degenerate rough family by 
$$
\mathscr{F}^\natural:=\{\{\eta\}: \eta \in X\}.
$$
To keep the notation simple, we suppress the symbol of the topology $\tau$ whenever it is understood from the context. 

\begin{defi}\label{defi:mainroughness}
A sequence $\bm{x}=(x_n: n \in \omega)$ taking values in $X$ is said to be $\mathcal{I}$\emph{-convergent to} $\eta \in X$ \emph{with roughness} 
$\mathscr{F}$, shortened as 
$(\mathcal{I}, \mathscr{F})\text{-}\lim\nolimits_n x_n =\eta$, provided that $\{n \in \omega: x_n \notin U\} \in \mathcal{I}$ 
for all open sets $U\subseteq X$ such that $F_\eta \subseteq U$. We also write 
$$
\mathsf{Lim}_{\bm{x}}(\mathcal{I}, \mathscr{F}):=\left\{\eta \in X: (\mathcal{I}, \mathscr{F})\text{-}\lim\nolimits_n x_n =\eta\right\}.
$$
If $\mathcal{I}=\mathrm{Fin}$ we simply speak about $\mathscr{F}$-convergence. 
\end{defi}

Of course, ideal convergence with roughness $\mathscr{F}^\natural$ is nothing but classical ideal convergence. However, it is known that Definition \ref{defi:mainroughness} provides a new notion which is not included in the classical one, even modifying ideals and topology on the underlying set $X$, see \cite[Proposition 1.2]{LeonettiJCA}. We refer to \cite{LeonettiJCA} and references therein for an introduction and characterizations on rough ideal convergence. 

\begin{defi}\label{defi:mainroughnessstar}
A sequence $\bm{x}=(x_n: n \in \omega)$ taking values in $X$ is said to be $\mathcal{I}^\star$\emph{-convergent to} $\eta \in X$ \emph{with roughness} 
$\mathscr{F}$, shortened as 
$(\mathcal{I}^\star, \mathscr{F})\text{-}\lim\nolimits_n x_n =\eta$, provided that there exists a set $S \in \mathcal{I}$ such that the subsequence $(x_n: n \in \omega\setminus S)$ is $\mathscr{F}$-convergent to $\eta$. We also write 
$$
\mathsf{Lim}_{\bm{x}}(\mathcal{I}^\star, \mathscr{F}):=\left\{\eta \in X: (\mathcal{I}^\star, \mathscr{F})\text{-}\lim\nolimits_n x_n =\eta\right\}.
$$
\end{defi}
It is clear that $\mathsf{Lim}_{\bm{x}}(\mathcal{I}^\star, \mathscr{F})\subseteq \mathsf{Lim}_{\bm{x}}(\mathcal{I}, \mathscr{F})$. 
Indeed, this is the obvious generalization of the fact that $\mathcal{I}^\star$-convergence (that is, $(\mathcal{I}^\star, \mathscr{F}^\natural)$-convergence) is stronger than $\mathcal{I}$-convergence (that is, $(\mathcal{I}, \mathscr{F}^\natural)$-convergence). It is a classical result that $\mathcal{I}$-convergence and $\mathcal{I}^\star$-convergence coincide if and only if $\mathcal{I}$ is a $P$-ideal, see e.g. \cite[Theorem 3.2]{MR1844385}. In our first result we show, under certain restrictions on $\mathscr{F}$ (which include e.g. the case where each $F_\eta$ is finite) the analogous characterization holds for the equivalence of the two above notions. 
To this aim, recall that a metric space $X$ is said to have the $\textsc{UC}$\emph{-property} if nonempty closed sets are at a positive distance apart, 
that is, for all nonempty disjoint closed sets $F, F^{\prime} \subseteq X$, there exists $\varepsilon>0$ such that $d(x,x^\prime)> \varepsilon$ for all $x \in F$ and $x^\prime \in F^\prime$, where $d$ is the metric on $X$. For instance, all compact metric spaces $X$ have the $\textsc{UC}$-property while Euclidean spaces $\mathbb{R}^k$ do not have it; see  \cite{MR1165059, MR42109} and references therein. 
 \begin{prop}\label{prop:Pideal_equiv_rough_Istar}
Let $\mathcal{I}$ be an ideal on $\omega$. Let also $X$ be a metric space with the \textsc{UC}-property and $\mathscr{F}$ be a rough family of closed sets such that there exist $\eta\in X$ and a sequence $\bm{y}\in X^\omega$ for which 
$\lim_k y_k=\eta$ and $y_k \notin F_\eta$ for all $k \in \omega$. 

Then $\mathcal{I}$ is a $P$-ideal if and only if $\mathsf{Lim}_{\bm{x}}(\mathcal{I}^\star, \mathscr{F})=\mathsf{Lim}_{\bm{x}}(\mathcal{I}, \mathscr{F})$ for all $\bm{x} \in X^\omega$. 
\end{prop}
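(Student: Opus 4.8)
The plan is to prove the two implications separately, using throughout that the inclusion $\mathsf{Lim}_{\bm{x}}(\mathcal{I}^\star, \mathscr{F})\subseteq \mathsf{Lim}_{\bm{x}}(\mathcal{I}, \mathscr{F})$ always holds, so that the only content of the claimed equality is the reverse inclusion.

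For the implication ``$\mathcal{I}$ is a $P$-ideal $\Rightarrow$ equality'', I would fix $\bm{x}\in X^\omega$ and $\eta\in\mathsf{Lim}_{\bm{x}}(\mathcal{I},\mathscr{F})$ and exploit the metric to manufacture a countable decreasing base of open neighborhoods of $F_\eta$, namely $U_j:=\{x\in X: d(x,F_\eta)<1/j\}$, setting $A_j:=\{n\in\omega: x_n\notin U_j\}$. Each $A_j\in\mathcal{I}$ by $(\mathcal{I},\mathscr{F})$-convergence, and since $U_{j+1}\subseteq U_j$ the family $(A_j)$ is increasing; the $P$-ideal property then yields $S\in\mathcal{I}$ with $A_j\setminus S$ finite for every $j$. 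It remains to verify that $(x_n:n\in\omega\setminus S)$ is $\mathscr{F}$-convergent to $\eta$, i.e. that for every open $U\supseteq F_\eta$ some $A_j$ absorbs the exceptional set $\{n:x_n\notin U\}$. This is exactly where the \textsc{UC}-property enters: assuming $U\neq X$ (the case $U=X$ being trivial), the sets $F_\eta$ and $X\setminus U$ are disjoint nonempty closed sets, hence a positive distance $\varepsilon$ apart, so every point with $d(x,F_\eta)<\varepsilon$ lies in $U$; taking any $j$ with $1/j\leq\varepsilon$ gives $U_j\subseteq U$ and therefore $\{n\in\omega\setminus S:x_n\notin U\}\subseteq A_j\setminus S$ is finite. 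Note that this direction uses the closedness of the $F_\eta$ but not the auxiliary sequence $\bm{y}$.

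For the converse I would argue contrapositively, assuming $\mathcal{I}$ is not a $P$-ideal and building a single $\bm{x}$ that witnesses a strict inclusion at the distinguished point $\eta$. From the failure of the $P$-ideal property I extract an increasing witnessing sequence and pass to its consecutive differences to obtain a pairwise disjoint family $(B_j)$ in $\mathcal{I}$ for which no $S\in\mathcal{I}$ satisfies ``$B_j\setminus S$ finite for all $j$''. I then use the hypothesis sequence $\bm{y}$ to define $x_n:=y_j$ for $n\in B_j$ and $x_n:=\eta$ otherwise. Since $y_j\to\eta\in F_\eta\subseteq U$ for every open $U\supseteq F_\eta$, only finitely many blocks can fall outside $U$, so $\{n:x_n\notin U\}$ is a finite union of the $B_j$'s and hence lies in $\mathcal{I}$; thus $\eta\in\mathsf{Lim}_{\bm{x}}(\mathcal{I},\mathscr{F})$. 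On the other hand, each $y_j\notin F_\eta$ and $F_\eta$ is closed, so I can choose an open $U_j\supseteq F_\eta$ with $y_j\notin U_j$ (for instance the complement of a small closed ball around $y_j$). If some $S\in\mathcal{I}$ made $(x_n:n\in\omega\setminus S)$ $\mathscr{F}$-convergent to $\eta$, then finiteness of $\{n\in\omega\setminus S:x_n\notin U_j\}\supseteq B_j\setminus S$ would force $B_j\setminus S$ finite for every $j$, contradicting the choice of $(B_j)$. Hence $\eta\notin\mathsf{Lim}_{\bm{x}}(\mathcal{I}^\star,\mathscr{F})$ and the equality fails.

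I expect the main obstacle to be the final step of the forward direction, namely checking that the metric neighborhoods $U_j$ are cofinal, under reverse inclusion, among all open sets containing $F_\eta$; in a general metric space this can fail for unbounded or ``pinched'' closed sets, and it is precisely the \textsc{UC}-property that rescues the argument by converting the topological separation of $F_\eta$ from $X\setminus U$ into a uniform metric gap. The role of $\bm{y}$ is complementary and confined to the converse: it furnishes points genuinely outside the rough target $F_\eta$ that nonetheless approach $\eta$, which is exactly what allows the constructed sequence to be $(\mathcal{I},\mathscr{F})$-convergent while failing to be $(\mathcal{I}^\star,\mathscr{F})$-convergent.
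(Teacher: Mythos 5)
Your proposal is correct and follows essentially the same route as the paper: the forward direction uses the metric neighborhoods $U_j=\{x: d(x,F_\eta)<1/j\}$ together with the \textsc{UC}-property exactly as in the paper's argument, and your contrapositive converse builds the same witness sequence (values $y_j$ on the blocks $I_j\setminus I_{j-1}$, value $\eta$ elsewhere) that the paper constructs directly from an increasing sequence in $\mathcal{I}$. The only differences are cosmetic (contrapositive phrasing, $1/j$ versus $2^{-j}$), and your closing remarks about which hypotheses each direction actually uses match the paper's own remark after the proof.
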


In Section \ref{sec:main} we introduce our main definitions of this work, namely, the notions of rough ideal cluster points and rough ideal limit points (Definition \ref{defi:clusterpointsrough} and Definition \ref{defi:limitpointsrough}, respectively), and we study several structural properties and relationships between them. The proofs of all our results are given in Section \ref{sec:proofs}. 

\section{Main Results}\label{sec:main}
\subsection{Rough ideal cluster points} 
We start with the notion of cluster point in the context of $(\mathcal{I}, \mathscr{F})$-convergence. 
\begin{defi}\label{defi:clusterpointsrough}
Let $\bm{x}=(x_n: n \in \omega)$ be a sequence taking values in $X$. Then $\eta \in X$ is said to be an $\mathcal{I}$\emph{-cluster point of} $\bm{x}$ \emph{with roughness} 
$\mathscr{F}$, 
or $(\mathcal{I}, \mathscr{F})$\emph{-cluster point}, 
if $\{n \in \omega: x_n \in U\} \notin \mathcal{I}$ 
for all open sets $U\subseteq X$ such that $F_\eta \subseteq U$. Then, we define
$$
\Gamma_{\bm{x}}(\mathcal{I}, \mathscr{F}):=\left\{\eta \in X: \eta \text{ is an }(\mathcal{I}, \mathscr{F})\text{-cluster point of }\bm{x}\right\}.
$$
In the case of degenerate rough family, we simply write $\Gamma_{\bm{x}}(\mathcal{I}):=\Gamma_{\bm{x}}(\mathcal{I}, \mathscr{F}^\natural)$.
\end{defi}

It is immediate that 
$
\mathsf{Lim}_{\bm{x}}(\mathcal{I}, \mathscr{F})\subseteq \Gamma_{\bm{x}}(\mathcal{I}, \mathscr{F}) 
$  
(note that in \cite[p. 1084]{LeonettiJCA} it was incorrect stated that $\mathsf{Lim}_{\bm{x}}(\mathcal{I}, \mathscr{F})\subseteq \Gamma_{\bm{x}}(\mathcal{I})$).  
Observe that a point $\eta \in X$ belongs to $\Gamma_{\bm{x}}(\mathcal{I})$ precisely when $\eta$ is an $\mathcal{I}$-cluster point of $\bm{x}$, see \cite{MR3920799} and references therein for characterizations of $\Gamma_{\bm{x}}(\mathcal{I})$. There is a connection between the sets $\mathsf{Lim}_{\bm{x}}(\mathcal{I}, \mathscr{F})$ and $\Gamma_{\bm{x}}(\mathcal{I})$: in fact, it has been proved in 
\cite[Corollary 1.4]{LeonettiJCA} 
that, if the sequence $\bm{x}$ has a relatively compact image, $X$ is a regular topological space, and 
each $F_\eta$ is closed, then 
$$
\mathsf{Lim}_{\bm{x}}(\mathcal{I}, \mathscr{F})
=\left\{\eta \in X: \Gamma_{\bm{x}}(\mathcal{I})\subseteq F_\eta \right\}.
$$
Below we present an analogous characterization for $(\mathcal{I}, \mathscr{F})$-cluster points (hereafter, we use $\overline{S}$ for the closure of $S\subseteq X$). 
\begin{thm}\label{thm:01thmclusterrough}
Let $\mathcal{I}$ be an ideal on $\omega$ and $\bm{x}$ be a sequence taking values in a regular topological space $X$ such that $\{n \in \omega: x_n \notin K\} \in \mathcal{I}$ for some compact $K\subseteq X$. Also, pick a rough family $\mathscr{F}$. Then  
\begin{equation}\label{eq:firstinclusionroughclusterpoints}
\left\{\eta \in X: F_\eta \cap \Gamma_{\bm{x}}(\mathcal{I})\neq \emptyset\right\}\subseteq 
\Gamma_{\bm{x}}(\mathcal{I}, \mathscr{F})\subseteq \left\{\eta \in X: \overline{F_\eta} \cap \Gamma_{\bm{x}}(\mathcal{I})\neq \emptyset\right\}.
\end{equation}

In particular, if each $F_\eta$ is closed, then 
$$
\Gamma_{\bm{x}}(\mathcal{I}, \mathscr{F})=\left\{\eta \in X: F_\eta \cap \Gamma_{\bm{x}}(\mathcal{I})\neq \emptyset\right\}.
$$
\end{thm}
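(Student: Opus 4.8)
The plan is to prove the two inclusions in \eqref{eq:firstinclusionroughclusterpoints} separately and then read off the ``in particular'' claim. The leftmost inclusion is soft and uses neither regularity nor the compactness hypothesis: if $\eta\in X$ satisfies $F_\eta\cap \Gamma_{\bm{x}}(\mathcal{I})\neq\emptyset$, I fix $\zeta\in F_\eta\cap\Gamma_{\bm{x}}(\mathcal{I})$. For every open $U\supseteq F_\eta$ we have $\zeta\in F_\eta\subseteq U$, so $U$ is an open neighborhood of the $\mathcal{I}$-cluster point $\zeta$; hence $\{n\in\omega: x_n\in U\}\notin\mathcal{I}$. As $U$ was arbitrary, $\eta\in\Gamma_{\bm{x}}(\mathcal{I},\mathscr{F})$.

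For the rightmost inclusion I would argue by contraposition: assume $\overline{F_\eta}\cap\Gamma_{\bm{x}}(\mathcal{I})=\emptyset$ and produce an open set $U\supseteq F_\eta$ with $\{n: x_n\in U\}\in\mathcal{I}$, which shows $\eta\notin\Gamma_{\bm{x}}(\mathcal{I},\mathscr{F})$. First note that $\overline{F_\eta}\cap K$ is a closed subset of the compact set $K$, hence compact, and every $\zeta\in\overline{F_\eta}\cap K$ lies outside $\Gamma_{\bm{x}}(\mathcal{I})$, so there is an open $U_\zeta\ni\zeta$ with $\{n: x_n\in U_\zeta\}\in\mathcal{I}$. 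Extracting a finite subcover yields an open $V\supseteq\overline{F_\eta}\cap K$ with $\{n: x_n\in V\}\in\mathcal{I}$, using that $\mathcal{I}$ is closed under finite unions. Since $\{n: x_n\notin K\}\in\mathcal{I}$ while $\omega\notin\mathcal{I}$, the set $\{n: x_n\in K\}$ is $\mathcal{I}$-positive, so $K\not\subseteq V$ and $K\setminus V$ is a nonempty compact set disjoint from $\overline{F_\eta}$.

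The crux is to absorb the part of $F_\eta$ lying outside $K$ into an open set while keeping control inside $K$; here regularity enters. Using that $X$ is regular, the disjoint pair consisting of the compact set $K\setminus V$ and the closed set $\overline{F_\eta}$ can be separated by disjoint open sets (cover $K\setminus V$ by finitely many open sets, each disjoint from a corresponding open neighborhood of $\overline{F_\eta}$, and intersect the latter): this produces an open $U\supseteq\overline{F_\eta}\supseteq F_\eta$ with $U\cap(K\setminus V)=\emptyset$, that is, $U\cap K\subseteq V$. Consequently $\{n: x_n\in U\}\subseteq\{n: x_n\in V\}\cup\{n: x_n\notin K\}\in\mathcal{I}$, as desired. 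I expect this separation step---valid for regular, not necessarily Hausdorff, spaces---to be the main obstacle, precisely because $F_\eta$ need not be contained in $K$ and need not be compact, so one cannot separate $F_\eta$ from $K\setminus V$ directly and must exploit that only the indices with $x_n\in K$ carry weight.

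Finally, for the displayed ``in particular'' statement, if each $F_\eta$ is closed then $\overline{F_\eta}=F_\eta$, so the two outer sets in \eqref{eq:firstinclusionroughclusterpoints} coincide and all inclusions collapse to the claimed equality $\Gamma_{\bm{x}}(\mathcal{I},\mathscr{F})=\{\eta\in X: F_\eta\cap\Gamma_{\bm{x}}(\mathcal{I})\neq\emptyset\}$.
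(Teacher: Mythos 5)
Your proof is correct, and the first inclusion and the ``in particular'' step match the paper exactly; but your argument for the second inclusion takes a genuinely different route. The paper argues directly: given an $(\mathcal{I},\mathscr{F})$-cluster point $\eta$ and an arbitrary open $V\supseteq \overline{F_\eta}$, it uses regularity to shrink $V$ to an open $U$ with $\overline{F_\eta}\subseteq U\subseteq\overline U\subseteq V$, notes that $\{n: x_n\in \overline U\cap K\}\in\mathcal{I}^+$, invokes a cited lemma (that an $\mathcal{I}$-positive set of indices landing in a compact set forces an $\mathcal{I}$-cluster point there) to find a point of $\Gamma_{\bm{x}}(\mathcal{I})$ in $\overline U\cap K\subseteq V$, and finally uses the closedness of $\Gamma_{\bm{x}}(\mathcal{I})$ to pass from ``every open $V\supseteq\overline{F_\eta}$ meets $\Gamma_{\bm{x}}(\mathcal{I})$'' to ``$\overline{F_\eta}$ meets $\Gamma_{\bm{x}}(\mathcal{I})$.'' You instead argue by contraposition and inline the compactness argument yourself: you cover $\overline{F_\eta}\cap K$ by neighborhoods with $\mathcal{I}$-small index sets, extract a finite subcover $V$, and then use the standard regular-space separation of the compact set $K\setminus V$ from the disjoint closed set $\overline{F_\eta}$ to manufacture a single open $U\supseteq F_\eta$ with $\{n: x_n\in U\}\in\mathcal{I}$. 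Both proofs rest on the same engine (finite subcovers plus closure of $\mathcal{I}$ under finite unions, and regularity to separate a compact set from a closed set), but yours is self-contained --- it avoids both the external lemma and the closedness of $\Gamma_{\bm{x}}(\mathcal{I})$ --- at the cost of a slightly more delicate bookkeeping step ($U\cap K\subseteq V$, then splitting $\{n:x_n\in U\}$ according to whether $x_n\in K$), which you carry out correctly.
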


It is easy to see that the above result depends crucially on the hypothesis $\{n \in \omega: x_n \notin K\} \in \mathcal{I}$ for some compact $K\subseteq X$. Indeed, consider the following example: $X=\mathbb{R}$, $\mathcal{I}$ is an arbitrary ideal, $x_n:=n$ for all $n \in \omega$, and the rough family $\mathscr{F}$ is defined by $F_0:=\omega$ and $F_\eta:=\{\eta\}$ for all nonzero $\eta$. In particular, all $F_\eta$ are closed and $\Gamma_{\bm{x}}(\mathcal{I})=\emptyset$. It follows that 
$\Gamma_{\bm{x}}(\mathcal{I}, \mathscr{F})=\{0\}$ while there are no $\eta \in \mathbb{R}$ for which $F_\eta \cap \Gamma_{\bm{x}}(\mathcal{I})\neq \emptyset$. 

\begin{rmk}\label{rmk:additionalclaims}
    As it will be clear from the proof of Theorem \ref{thm:01thmclusterrough}, the first inclusion in \eqref{eq:firstinclusionroughclusterpoints} holds even if $X$ is not regular and there are no compact $K$ for which $\{n \in \omega: x_n \notin K\} \in \mathcal{I}$. In addition, the same proof shows that, if a point $\eta \in X$ is given with $F_\eta$ closed, then $\eta$ is an $(\mathcal{I}, \mathscr{F})$-cluster point if and only if $F_\eta \cap \Gamma_{\bm{x}}(\mathcal{I})\neq \emptyset$. 
\end{rmk}

In the next example we show that, even in the case of bounded real sequences and $\mathcal{I}=\mathrm{Fin}$, both inclusions in \eqref{eq:firstinclusionroughclusterpoints} may be strict. 
\begin{example}\label{example:strinctinclusion}
Set $X=\mathbb{R}$ with the standard topology, $\mathcal{I}=\mathrm{Fin}$, and define the bounded real sequence $\bm{x}$ by $x_n:=2^{-n}$ for all $n \in \omega$. Define the rough family $\mathscr{F}$ as follows: $F_1:=\{2^{-n}: n \in \omega\}$, $F_{1/3}:=\{3^{-n-1}: n \in \omega\}$, and $F_{\eta}:=\{\eta\}$ for all $\eta \in \mathbb{R}\setminus \{\nicefrac{1}{3},1\}$. Since $\bm{x}$ is convergent, we get $\{0\}=\Gamma_{\bm{x}}(\mathcal{I})\subseteq \Gamma_{\bm{x}}(\mathcal{I}, \mathscr{F})$. Then the following hold: 
\begin{enumerate}[label={\rm (\roman{*})}]
\item $\left\{\eta \in X: F_\eta \cap \Gamma_{\bm{x}}(\mathcal{I})\neq \emptyset\right\}=\{0\}$. This is clear from the definition of $\mathscr{F}$.
\item $\Gamma_{\bm{x}}(\mathcal{I}, \mathscr{F})=\{0,1\}$. This follows by checking each case. First, as observed above, $0$ is an $(\mathcal{I}, \mathscr{F})$-cluster point. Second, $1$ is an $(\mathcal{I}, \mathscr{F})$-cluster point since $F_1$ is the range of $\bm{x}$. Third, $\nicefrac{1}{3}$ is not an $(\mathcal{I}, \mathscr{F})$-cluster point because there exists an open set $U$ containing $F_{1/3}$ such that $U \cap \{x_n: n \in \omega\}=\emptyset$. Lastly, if $\eta \in \mathbb{R}\setminus \{0,\nicefrac{1}{3},1\}$, then there exist only finitely many $n \in \omega$ such that $x_n \in (\eta-\varepsilon,\eta+\varepsilon)$, provided that $\varepsilon>0$ is sufficiently small. 
\item $\left\{\eta \in X: \overline{F_\eta} \cap \Gamma_{\bm{x}}(\mathcal{I})\neq \emptyset\right\}=\{0, \nicefrac{1}{3},1\}$. This follows because   $\overline{F_\eta}=F_\eta \cup \{0\}$ if $\eta \in \{\nicefrac{1}{3},1\}$, and $F_\eta$ is closed otherwise.
\end{enumerate} 
\end{example}

The following consequence is immediate from Theorem \ref{thm:01thmclusterrough}.
\begin{cor}\label{cor:nonempty}
With the same hypotheses of Theorem \ref{thm:01thmclusterrough}, suppose that each $F_\eta$ is closed. Then $\Gamma_{\bm{x}}(\mathcal{I})\neq \emptyset$ if and only if $\Gamma_{\bm{x}}(\mathcal{I}, \mathscr{F})\neq \emptyset$. 
\end{cor}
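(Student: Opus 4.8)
The plan is to deduce Corollary \ref{cor:nonempty} directly from the equality
$\Gamma_{\bm{x}}(\mathcal{I}, \mathscr{F})=\{\eta \in X: F_\eta \cap \Gamma_{\bm{x}}(\mathcal{I})\neq \emptyset\}$
furnished by the final clause of Theorem \ref{thm:01thmclusterrough}, which is available precisely because each $F_\eta$ is assumed closed. The argument splits into the two implications of the biconditional.

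First I would prove the forward direction. Suppose $\Gamma_{\bm{x}}(\mathcal{I})\neq \emptyset$, and fix some $\zeta \in \Gamma_{\bm{x}}(\mathcal{I})$. The key observation is that the rough family satisfies $\eta \in F_\eta$ for every $\eta \in X$ by definition; in particular $\zeta \in F_\zeta$, so $\zeta \in F_\zeta \cap \Gamma_{\bm{x}}(\mathcal{I})$, whence $F_\zeta \cap \Gamma_{\bm{x}}(\mathcal{I})\neq \emptyset$. By the displayed equality this means $\zeta \in \Gamma_{\bm{x}}(\mathcal{I}, \mathscr{F})$, so $\Gamma_{\bm{x}}(\mathcal{I}, \mathscr{F})\neq \emptyset$.

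For the converse, suppose $\Gamma_{\bm{x}}(\mathcal{I}, \mathscr{F})\neq \emptyset$ and pick any $\eta$ in it. By the displayed equality, $F_\eta \cap \Gamma_{\bm{x}}(\mathcal{I})\neq \emptyset$; in particular $\Gamma_{\bm{x}}(\mathcal{I})$ contains at least one point and hence is nonempty. This completes both implications.

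I do not anticipate any genuine obstacle here: once the closed-set form of Theorem \ref{thm:01thmclusterrough} is in hand, the corollary is a purely formal consequence, and the only substantive input beyond that equality is the defining property $\eta \in F_\eta$ of a rough family, used to handle the forward direction. No compactness, regularity, or metric considerations enter directly; they are already absorbed into the hypotheses of Theorem \ref{thm:01thmclusterrough} that the statement inherits verbatim.
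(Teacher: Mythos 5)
Your proof is correct and is exactly the argument the paper intends: the corollary is stated as an immediate consequence of the closed-set form of Theorem \ref{thm:01thmclusterrough}, with the forward direction relying on the defining property $\eta\in F_\eta$ of a rough family, just as you do.
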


At this point, with a similar notation in \cite[Definition 1.2]{FKL2026}, let $\mathscr{C}_X(\mathcal{I}, \mathscr{F})$ be the family of sets of the type $\Gamma_{\bm{x}}(\mathcal{I}, \mathscr{F})$ for some sequence $\bm{x}$ taking values in $X$, that is, 
$$
\mathscr{C}_X(\mathcal{I}, \mathscr{F}):=\left\{F\subseteq X: F=\Gamma_{\bm{x}}(\mathcal{I}, \mathscr{F}) \text{ for some }\bm{x} \in X^\omega\right\}.
$$
\begin{cor}\label{cor:familiescluster}
    Let $\mathcal{I}$ be an ideal on $\omega$ for which there exists a sequence of pairwise disjoint $\mathcal{I}$\text{-}positive sets \textup{(}for instance, a Baire ideal or a nowhere maximal ideal\textup{)}. 
    Let $X$ be a compact metric space, and pick a rough family $\mathscr{F}$ made of closed sets. Then 
    $$
    \mathscr{C}_X(\mathcal{I}, \mathscr{F})=\left\{\left\{\eta \in X: F_\eta \cap C\neq \emptyset \right\}: C\text{ is a nonempty closed subset of }X\right\}.
    $$
\end{cor}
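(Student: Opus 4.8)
The plan is to reduce the entire statement to Theorem~\ref{thm:01thmclusterrough}, combined with the two classical facts that, in a compact metric space, the ordinary $\mathcal{I}$-cluster set of any sequence is nonempty and closed, and that, conversely, every nonempty closed set is realized as such a cluster set once $\mathcal{I}$ carries a sequence of pairwise disjoint positive sets. First I would check that the hypotheses of Theorem~\ref{thm:01thmclusterrough} are met by \emph{every} $\bm{x}\in X^\omega$: a compact metric space is regular, the choice $K=X$ gives $\{n\in\omega:x_n\notin K\}=\emptyset\in\mathcal{I}$, and $\mathscr{F}$ consists of closed sets by assumption. Hence, since each $F_\eta$ is closed, for every sequence one has
$$
\Gamma_{\bm{x}}(\mathcal{I},\mathscr{F})=\left\{\eta\in X: F_\eta\cap\Gamma_{\bm{x}}(\mathcal{I})\neq\emptyset\right\}.
$$

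This already delivers the inclusion $\mathscr{C}_X(\mathcal{I},\mathscr{F})\subseteq\{\{\eta:F_\eta\cap C\neq\emptyset\}:C\text{ nonempty closed}\}$, provided I verify that $C:=\Gamma_{\bm{x}}(\mathcal{I})$ is always a nonempty closed subset of $X$. Closedness is routine: if $\eta\notin\Gamma_{\bm{x}}(\mathcal{I})$, an open neighborhood $U\ni\eta$ with $\{n:x_n\in U\}\in\mathcal{I}$ witnesses $\eta'\notin\Gamma_{\bm{x}}(\mathcal{I})$ for every $\eta'\in U$, so the complement is open. Nonemptiness uses compactness: were $\Gamma_{\bm{x}}(\mathcal{I})$ empty, each point would admit such a neighborhood, finitely many of them would cover $X$, and a finite union of sets in $\mathcal{I}$ would equal $\omega$, contradicting $\omega\notin\mathcal{I}$.

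The reverse inclusion is the heart of the argument and the step I expect to be the main obstacle, since it is where the assumption on $\mathcal{I}$ is consumed. Given a nonempty closed $C\subseteq X$, I must exhibit a sequence $\bm{x}$ with $\Gamma_{\bm{x}}(\mathcal{I})=C$; the displayed identity then forces $\Gamma_{\bm{x}}(\mathcal{I},\mathscr{F})=\{\eta:F_\eta\cap C\neq\emptyset\}$. To build it, fix a sequence $(A_k:k\in\omega)$ of pairwise disjoint $\mathcal{I}$-positive sets, and enumerate a countable dense subset $\{c_k:k\in\omega\}$ of $C$ (which exists since $C$, as a subspace of a separable metric space, is separable; repetitions are allowed to handle finite $C$). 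Define $x_n:=c_k$ whenever $n\in A_k$, and $x_n:=c_0$ on the remaining indices, so that the whole range of $\bm{x}$ lies in the closed set $C$. For each $k$ and each open $U\ni c_k$ one has $A_k\subseteq\{n:x_n\in U\}$, and since $A_k\notin\mathcal{I}$ and $\mathcal{I}$ is downward closed this set is $\mathcal{I}$-positive; thus every $c_k$ lies in $\Gamma_{\bm{x}}(\mathcal{I})$, and by closedness $C=\overline{\{c_k:k\in\omega\}}\subseteq\Gamma_{\bm{x}}(\mathcal{I})$.

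The one genuinely delicate point is excluding spurious cluster points outside $C$, and confining the range to $C$ is exactly what handles it: since every $\mathcal{I}$-cluster point lies in the closure of the range (a neighborhood disjoint from the range witnesses non-clustering via $\emptyset\in\mathcal{I}$), one gets $\Gamma_{\bm{x}}(\mathcal{I})\subseteq\overline{\{x_n:n\in\omega\}}\subseteq\overline{C}=C$. Combining the two inclusions yields $\Gamma_{\bm{x}}(\mathcal{I})=C$, which completes the realization and hence the proof.
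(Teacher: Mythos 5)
Your proof is correct and follows essentially the same route as the paper: reduce everything to Theorem~\ref{thm:01thmclusterrough} together with the fact that, under the stated hypothesis on $\mathcal{I}$, the sets $\Gamma_{\bm{x}}(\mathcal{I})$ range exactly over the nonempty closed subsets of $X$. The only difference is that the paper outsources the latter fact to \cite[Proposition 5.1 and Theorem 6.2]{FKL2026}, whereas you prove it inline via the standard construction with pairwise disjoint $\mathcal{I}$-positive sets and a countable dense subset of $C$; your construction is the same one used there.
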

As an application, if $X$ is the real interval $[a,b]$, $\mathcal{I}$ is a Baire ideal, and $F_\eta:=[\eta-\varepsilon,\eta+\varepsilon]$ for all $\eta \in X$ and a given $\varepsilon> 0$, then $\mathscr{C}_X(\mathcal{I}, \mathscr{F})$ is the family of restrictions on $X$ of the nonempty (necessarily finite) unions of closed intervals $[x,y]\subseteq [a-\varepsilon,b+\varepsilon]$ with length at least $2\varepsilon$.

Now, we recall the basic fact that the set $\Gamma_{\bm{x}}(\mathcal{I})$ (that is, $\Gamma_{\bm{x}}(\mathcal{I}, \mathscr{F}^\natural)$) is always closed, see e.g. \cite[Lemma 3.1(iv)]{MR3920799}. One might wonder whether the same property holds replacing $\mathscr{F}^\natural$ with arbitrary rough families $\mathscr{F}$: for instance, is it true that elements of $\mathscr{C}_X(\mathcal{I}, \mathscr{F})$ in Corollary \ref{cor:familiescluster} are closed? In general, the answer is negative. Indeed, suppose that $X=\mathbb{R}$, $\mathcal{I}=\mathrm{Fin}$, and 
that $\bm{x}$ is the sequence defined by $x_n=(-1)^n$ for all $n \in \omega$. 
Choosing the rough family $\mathscr{F}=\{(\eta-3,\eta+3): \eta \in X\}$, it is easy to check that 
$\Gamma_{\bm{x}}(\mathcal{I}, \mathscr{F})=(-4,4)$. The answer remains negative even if we assume, in addition, that each member of the rough family is closed. In fact, consider the very same example, with the new rough family $\mathscr{F}^\prime=\{F_\eta^\prime: \eta\in X\}$ defined by $F^\prime_{\eta}:=\{1,\eta\}$ if $|\eta|<2$ and $F^\prime_{\eta}:=\{\eta\}$ otherwise. Then each $F^\prime_\eta$ is closed and $\Gamma_{\bm{x}}(\mathcal{I}, \mathscr{F}^\prime)=(-2,2)$.

To provide a sufficient condition for the closedness of $\Gamma_{\bm{x}}(\mathcal{I}, \mathscr{F})$, we recall the following notions. 
Given a topological space $X$, we endow the hyperspace 
$$
\mathcal{H}(X):=\{F\subseteq X: F \text{ nonempty closed}\}
$$
with the \emph{upper Vietoris topology} $\widehat{\tau}$, that is, the topology generated by the base of sets $\{F \in \mathcal{H}(X): F\subseteq U\}$, with $U \in \tau$ open. It is known that, if each $F_\eta$ is closed and the map $\eta\mapsto F_\eta$ is $\widehat{\tau}$-continuous, then $\mathsf{Lim}_{\bm{x}}(\mathcal{I}, \mathscr{F})$ is closed, see \cite[Theorem 1.6]{LeonettiJCA}. We show that the analogous claim holds also for the set of $(\mathcal{I}, \mathscr{F})$-cluster points. 

\begin{thm}\label{thm:clustersetclosed}
Let $\bm{x}$ be a sequence taking values in a topological space $X$, let $\mathcal{I}$ be an ideal on $\omega$, and pick a rough family $\mathscr{F}$ made of closed sets. Also, suppose that the map 
$\eta\mapsto F_\eta$ is $\widehat{\tau}$-continuous. 
Then $\Gamma_{\bm{x}}(\mathcal{I}, \mathscr{F})$ is closed. 
\end{thm}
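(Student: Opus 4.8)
The plan is to show that the complement $X \setminus \Gamma_{\bm{x}}(\mathcal{I}, \mathscr{F})$ is open by producing, around each of its points, an open neighborhood contained in it. The only substantive ingredient is the correct reading of the $\widehat{\tau}$-continuity hypothesis, combined with the observation that the $(\mathcal{I}, \mathscr{F})$-cluster point condition of Definition \ref{defi:clusterpointsrough} refers to $F_\eta$ only through the open sets that contain it.

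First I would unwind the continuity assumption. Since each $F_\eta$ is nonempty (because $\eta \in F_\eta$) and closed, the assignment $\eta \mapsto F_\eta$ indeed takes values in $\mathcal{H}(X)$. For any open $U \subseteq X$ the basic upper Vietoris open set is $\langle U \rangle := \{F \in \mathcal{H}(X): F \subseteq U\}$, and $\widehat{\tau}$-continuity says precisely that its preimage $\{\eta \in X : F_\eta \subseteq U\}$ is open in $X$ for every open $U$.

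Now fix $\eta \in X \setminus \Gamma_{\bm{x}}(\mathcal{I}, \mathscr{F})$. By Definition \ref{defi:clusterpointsrough}, the failure of $\eta$ to be a cluster point means there exists an open set $U \supseteq F_\eta$ with $\{n \in \omega: x_n \in U\} \in \mathcal{I}$. Set $V := \{\zeta \in X : F_\zeta \subseteq U\}$; this is open by the previous paragraph and contains $\eta$, since $F_\eta \subseteq U$. For every $\zeta \in V$ we have $F_\zeta \subseteq U$ with $U$ open and $\{n \in \omega : x_n \in U\} \in \mathcal{I}$, so the very same open set $U$ witnesses that $\zeta$ is not an $(\mathcal{I}, \mathscr{F})$-cluster point. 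Hence $V \subseteq X \setminus \Gamma_{\bm{x}}(\mathcal{I}, \mathscr{F})$, proving that the complement is open and therefore that $\Gamma_{\bm{x}}(\mathcal{I}, \mathscr{F})$ is closed.

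I do not expect a genuine obstacle here, as the argument is essentially a direct translation of the hypothesis. The only point demanding care is ensuring that a \emph{single} open set $U$ simultaneously contains $F_\zeta$ and has $\mathcal{I}$-small trace $\{n : x_n \in U\}$ for all $\zeta$ in a neighborhood of $\eta$; this uniformity is exactly what upper Vietoris continuity delivers, and it is precisely what fails for arbitrary rough families. Indeed, the displayed counterexamples with $\mathscr{F}=\{(\eta-3,\eta+3): \eta \in X\}$ and with $\mathscr{F}^\prime$ show that without $\widehat{\tau}$-continuity the set of cluster points need not be closed, consistent with the fact that in those cases the map $\eta \mapsto F_\eta$ is not $\widehat{\tau}$-continuous.
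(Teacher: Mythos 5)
Your proof is correct and rests on the same key observation as the paper's, namely that $\widehat{\tau}$-continuity makes $V=\{\zeta \in X : F_\zeta \subseteq U\}$ an open set on which a single witnessing open set $U$ works uniformly. The paper phrases this contrapositively, taking a convergent net in $\Gamma_{\bm{x}}(\mathcal{I}, \mathscr{F})$ and showing its limit remains an $(\mathcal{I}, \mathscr{F})$-cluster point, whereas you show the complement is open directly; the two arguments are interchangeable.
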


For a practical application of Theorem \ref{thm:clustersetclosed}, denote by $B_r(\eta)$ the closed ball with center $\eta$ and radius $r \in [0,\infty]$. Then we have the following property, which has been shown in the proof of \cite[Corollary 1.7]{LeonettiJCA}; we omit further details.  
\begin{lem}\label{lem:hattaucontinuous}
    Let $X$ be a metric space with the \textsc{UC}-property. Pick an upper semicontinuous function $r: X\to [0,\infty)$. Then the map $X\to \mathcal{H}(X)$ given by $\eta \mapsto B_{r(\eta)}(\eta)$ is $\widehat{\tau}$-continuous. 
\end{lem}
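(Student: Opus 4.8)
The plan is to establish continuity locally. Since $\widehat{\tau}$ is generated by the sets $\{F\in\mathcal{H}(X):F\subseteq U\}$ with $U$ open, it suffices to show that for each such $U$ the preimage $\{\eta\in X:B_{r(\eta)}(\eta)\subseteq U\}$ is open. So fix $\eta_0$ in this preimage, abbreviate $\rho_0:=r(\eta_0)$, and note that $B_{\rho_0}(\eta_0)\subseteq U$ with $\eta_0\in B_{\rho_0}(\eta_0)$, so $B_{\rho_0}(\eta_0)$ is a genuine nonempty closed set. The goal becomes producing $\delta>0$ with $B_{r(\eta)}(\eta)\subseteq U$ whenever $d(\eta,\eta_0)<\delta$, which exhibits a ball around $\eta_0$ inside the preimage. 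If $X\setminus U=\emptyset$ there is nothing to prove, so I assume $X\setminus U$ is a nonempty closed set.

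The first substantive step is to invoke the \textsc{UC}-property. The sets $B_{\rho_0}(\eta_0)$ and $X\setminus U$ are nonempty, closed, and disjoint, so there is $\varepsilon>0$ with $d(p,q)>\varepsilon$ for all $p\in B_{\rho_0}(\eta_0)$ and all $q\in X\setminus U$. Hence the open $\varepsilon$-enlargement $N_\varepsilon(B_{\rho_0}(\eta_0)):=\{z\in X:d(z,B_{\rho_0}(\eta_0))<\varepsilon\}$ is contained in $U$, because a point within distance $\varepsilon$ of the ball cannot lie in $X\setminus U$. This is the structural substitute for the compactness argument that would be available on a proper space, and it is the key role played by the \textsc{UC} hypothesis.

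Next I would use upper semicontinuity of $r$ together with the triangle inequality to steer the nearby balls into this enlargement. By upper semicontinuity at $\eta_0$ there is $\delta_1>0$ with $r(\eta)<\rho_0+\varepsilon/2$ whenever $d(\eta,\eta_0)<\delta_1$; set $\delta:=\min\{\delta_1,\varepsilon/2\}$. Then for $d(\eta,\eta_0)<\delta$ and any $z\in B_{r(\eta)}(\eta)$ one has $d(z,\eta_0)\le d(z,\eta)+d(\eta,\eta_0)\le r(\eta)+d(\eta,\eta_0)<\rho_0+\varepsilon$, so $z$ lies within distance $\rho_0+\varepsilon$ of the centre $\eta_0$.

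I expect the \emph{main obstacle} to be the final step: upgrading the crude estimate $d(z,\eta_0)<\rho_0+\varepsilon$ to the genuine containment $z\in N_\varepsilon(B_{\rho_0}(\eta_0))$, that is, producing a point of $B_{\rho_0}(\eta_0)$ within distance $\varepsilon$ of $z$. In a length space one would slide $z$ back toward $\eta_0$ along a geodesic until the radius drops to $\rho_0$, but in a bare metric space no such retraction exists and the naive choice of the centre only gives $d(z,\eta_0)<\rho_0+\varepsilon$, which is far too weak. Here the \textsc{UC}-property must be exploited a second time, in its sequential guise: if the desired containment failed one would obtain $\eta_k\to\eta_0$ and $z_k\in B_{r(\eta_k)}(\eta_k)\setminus U$ with $\limsup_k d(z_k,\eta_0)\le\rho_0$, and the uniform separation of closed sets should force the bounded sequence $(z_k)$ to accumulate at some $z^\star\in X\setminus U$ with $d(z^\star,\eta_0)\le\rho_0$, contradicting $B_{\rho_0}(\eta_0)\subseteq U$. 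Extracting this convergent subsequence is the delicate point at which the \textsc{UC}-property (rather than mere completeness or boundedness) is used in full strength; once the inclusion $B_{r(\eta)}(\eta)\subseteq N_\varepsilon(B_{\rho_0}(\eta_0))\subseteq U$ is secured for $d(\eta,\eta_0)<\delta$, the lemma follows.
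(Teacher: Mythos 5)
Your reduction to subbasic sets, the application of the \textsc{UC}-property to get $\varepsilon>0$ with $N_\varepsilon(B_{\rho_0}(\eta_0))\subseteq U$, and the estimate $d(z,\eta_0)<\rho_0+\varepsilon$ for $z\in B_{r(\eta)}(\eta)$ with $d(\eta,\eta_0)<\delta$ are all correct, and you have identified exactly the right sticking point; note that the paper offers no proof of its own here, only a pointer to the proof of Corollary 1.7 of the companion paper, so this is the natural argument to attempt. However, your proposed repair of the last step fails. The \textsc{UC}-property does not force a bounded sequence to cluster: an infinite uniformly discrete space has the \textsc{UC}-property, and more generally in an Atsuji space only sequences whose isolation indices $d(x_n,X\setminus\{x_n\})$ tend to $0$ must have cluster points, whereas your hypothetical points $z_k$ may well be uniformly isolated. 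So the sentence ``the uniform separation of closed sets should force the bounded sequence $(z_k)$ to accumulate'' has no justification, and the proof is incomplete at precisely the step you flagged.

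Worse, that step cannot be repaired in the stated generality: the passage from $d(z,\eta_0)<\rho_0+\varepsilon$ to $d(z,B_{\rho_0}(\eta_0))<\varepsilon$ genuinely fails in \textsc{UC} metric spaces, and with it the lemma. Consider the countable hedgehog $X=\{\eta_0\}\cup\{\eta_k:k\ge 1\}\cup\{z_k:k\ge 1\}$ in which $\eta_k$ and $z_k$ sit on spine $k$ at radial distances $1/k$ and $1+1/k$ from the centre $\eta_0$ (within a spine the distance is the difference of radial coordinates, across spines the sum). This space has the \textsc{UC}-property: its unique accumulation point is $\eta_0$, and points bounded away from $\eta_0$ are uniformly separated. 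Take $r\equiv 1$. Then $B_1(\eta_0)=\{\eta_0\}\cup\{\eta_k:k\ge1\}=:U$ is open, yet $d(\eta_k,z_k)=1$ gives $z_k\in B_1(\eta_k)\setminus U$ for every $k$, so $\{\eta\in X:B_{r(\eta)}(\eta)\subseteq U\}$ contains $\eta_0$ but none of the $\eta_k\to\eta_0$, and is therefore not open. What rescues the statement in the paper's actual applications is the identity $d(z,B_\rho(c))=\max\{0,d(z,c)-\rho\}$, valid in $\mathbb{R}$ and more generally in length spaces; under that additional assumption your crude estimate immediately gives $z\in N_\varepsilon(B_{\rho_0}(\eta_0))\subseteq U$ and your argument closes with no further appeal to compactness.
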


As an immediate consequence of Theorem \ref{thm:clustersetclosed} and Lemma \ref{lem:hattaucontinuous}, we get the following.
\begin{cor}\label{cor:UCpropertycluster}
Let $\bm{x}$ be a sequence taking values in a metric space $X$ with the \textsc{UC}-property, let $\mathcal{I}$ be an ideal on $\omega$, and fix an upper semicontinuous function $r: X\to [0,\infty)$ such that $F_\eta=B_{r(\eta)}(\eta)$ for all $\eta \in X$.  
Then $\Gamma_{\bm{x}}(\mathcal{I}, \mathscr{F})$ is closed. 
\end{cor}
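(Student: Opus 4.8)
The plan is to deduce the statement directly by chaining Lemma \ref{lem:hattaucontinuous} into Theorem \ref{thm:clustersetclosed}, so the entire task reduces to verifying that the two hypotheses of Theorem \ref{thm:clustersetclosed} are satisfied under the present assumptions: namely, that $\mathscr{F}$ is a rough family made of closed sets, and that the map $\eta \mapsto F_\eta$ is $\widehat{\tau}$-continuous.

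First I would check that $\mathscr{F}=\{F_\eta:\eta\in X\}$ is a genuine rough family consisting of closed sets. Since $r(\eta)\in[0,\infty)$ is finite, each $F_\eta=B_{r(\eta)}(\eta)$ is a closed ball of finite radius, hence a closed subset of $X$; moreover it is nonempty because it contains its center, and in particular $\eta\in F_\eta$, so the rough-family condition holds and each $F_\eta$ belongs to the hyperspace $\mathcal{H}(X)$. (Even in the degenerate case $r(\eta)=0$ one has $F_\eta=\{\eta\}$, which is closed and nonempty.) This disposes of the first hypothesis.

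Next I would establish the $\widehat{\tau}$-continuity of $\eta\mapsto F_\eta$. Here the assumptions are exactly those of Lemma \ref{lem:hattaucontinuous}: $X$ is a metric space with the \textsc{UC}-property and $r\colon X\to[0,\infty)$ is upper semicontinuous. Applying that lemma, the map $X\to\mathcal{H}(X)$, $\eta\mapsto B_{r(\eta)}(\eta)=F_\eta$, is $\widehat{\tau}$-continuous, which is precisely the second hypothesis required by Theorem \ref{thm:clustersetclosed}.

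With both hypotheses verified, Theorem \ref{thm:clustersetclosed} applies and yields that $\Gamma_{\bm{x}}(\mathcal{I}, \mathscr{F})$ is closed, completing the proof. I do not expect any genuine obstacle here, since all the substantive work is carried by Lemma \ref{lem:hattaucontinuous} and Theorem \ref{thm:clustersetclosed}; the only point deserving a word of care is confirming that the balls $B_{r(\eta)}(\eta)$ are indeed nonempty and closed, so that they legitimately live in $\mathcal{H}(X)$ and the hypotheses of the two cited results line up.
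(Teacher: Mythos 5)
Your proposal is correct and follows exactly the route the paper intends: the corollary is stated there as an immediate consequence of Lemma \ref{lem:hattaucontinuous} (giving $\widehat{\tau}$-continuity of $\eta\mapsto F_\eta$) combined with Theorem \ref{thm:clustersetclosed}. The additional care you take in checking that each $B_{r(\eta)}(\eta)$ is a nonempty closed set is a harmless and accurate completion of the same argument.
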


As we will see in Theorem \ref{tmm:uppersemicrimpliesGammaLimcont}, if $r$ is, in addition, bounded, then the map $\bm{x} \mapsto \Gamma_{\bm{x}}(\mathcal{I}, \mathscr{F})$ is also continuous; cf. Example \ref{ex:negative} for the unbounded case.

%%%%%%%%%%%%%%%%%%%%%%%%%%%%%%%%%%%%%%%%%%%%%%%%%
\subsection{Rough ideal limit points} 
We continue with the notion of limit point in the context of $(\mathcal{I}, \mathscr{F})$-convergence. 
\begin{defi}\label{defi:limitpointsrough}
Let $\bm{x}=(x_n: n \in \omega)$ be a sequence with values in $X$. Then $\eta \in X$ is said to be 
an $\mathcal{I}$\emph{-limit point of} $\bm{x}$ \emph{with roughness} $\mathscr{F}$, 
or $(\mathcal{I}, \mathscr{F})$\emph{-limit point}, 
if there exists a set $S \in \mathcal{I}^+$ such that the subsequence $(x_n: n \in S)$ is $\mathscr{F}$-convergent to $\eta$. 
Then, we define
$$
\Lambda_{\bm{x}}(\mathcal{I}, \mathscr{F}):=\left\{\eta \in X: \eta \text{ is an }(\mathcal{I}, \mathscr{F})\text{-limit point of }\bm{x}\right\}.
$$
In the case of degenerate rough family, we simply write $\Lambda_{\bm{x}}(\mathcal{I}):=\Lambda_{\bm{x}}(\mathcal{I}, \mathscr{F}^\natural)$.
\end{defi}

Observe that a point $\eta \in X$ belongs to $\Lambda_{\bm{x}}(\mathcal{I})$ precisely when $\eta$ is an $\mathcal{I}$-limit point of $\bm{x}$, see e.g. \cite{ 
MR2181783, 
MR1838788, 
MR1844385}. It is also easy to check that 
\begin{equation}\label{eq:trivialinclusion1}
\mathsf{Lim}_{\bm{x}}(\mathcal{I}^\star, \mathscr{F})
\subseteq 
\Lambda_{\bm{x}}(\mathcal{I}, \mathscr{F})
\subseteq \Gamma_{\bm{x}}(\mathcal{I}, \mathscr{F}).
\end{equation}
In addition, as remarked before, we have  
\begin{equation}\label{eq:trivialinclusion2}
\mathsf{Lim}_{\bm{x}}(\mathcal{I}^\star, \mathscr{F})
\subseteq 
\mathsf{Lim}_{\bm{x}}(\mathcal{I}, \mathscr{F})
\subseteq \Gamma_{\bm{x}}(\mathcal{I}, \mathscr{F}).
\end{equation}
\begin{rmk}
There is no immediate relationship between $\mathsf{Lim}_{\bm{x}}(\mathcal{I}, \mathscr{F})$ and $\Lambda_{\bm{x}}(\mathcal{I}, \mathscr{F})$, even if $X$ is compact Hausdorff and $\mathscr{F}=\mathscr{F}^\natural$. In fact, every real bounded nonconvergent sequence $\bm{x}$ satisfies $\Lambda_{\bm{x}}(\mathrm{Fin}, \mathscr{F}^\natural)\setminus \mathsf{Lim}_{\bm{x}}(\mathrm{Fin}, \mathscr{F}^\natural)\neq \emptyset$. 

Vice versa, suppose that $X:=\beta \omega$ is the \v{C}ech--Stone compactification of $\omega$, which can be identified as the space of principal and free ultrafilters on $\omega$ and recall that a basic clopen set of $X$ is of the type $\hat{A}:=\{q \in \beta\omega: A \in q\}$ for some $A\subseteq \omega$. Fix a maximal ideal $\mathcal{I}$ on $\omega$, so that $p:=\mathcal{I}^+$ is a free ultrafilter on $\omega$. Define the sequence $\bm{x}$ with values in $X$ such that $x_n$ is the principal ultrafilter at $n$, for each $n \in \omega$. Then $\bm{x}$ is $\mathcal{I}$-convergent to $p$: indeed, if $U$ is an arbitrary open neighborhood of $p$, then $\hat{A}\subseteq U$ for some $A \in p$, which implies that 
$$
\{n \in \omega: x_n \notin U\}\subseteq \{n \in \omega: x_n \notin \hat{A}\}=\omega\setminus A \in \mathcal{I}.
$$
On the other hand, the subspace $\omega\subseteq X$ is discrete and sequentially closed; hence, there are no subsequences of $\bm{x}$ which converge to the free ultrafilter $p$. This proves that $p$ is not an $\mathcal{I}$-limit point of $\bm{x}$. Therefore $\mathsf{Lim}_{\bm{x}}(\mathcal{I}, \mathscr{F}^\natural)\setminus \Lambda_{\bm{x}}(\mathcal{I}, \mathscr{F}^\natural)\neq \emptyset$. 
\end{rmk}

\begin{prop}\label{prop:basicIlimitpoints}
Let $\mathcal{I}$ be an ideal on $\omega$,  
$\bm{x}$ be a sequence taking values in a topological space $X$, and pick a rough family $\mathscr{F}$. Then 
$$
\left\{\eta \in X: F_\eta \cap \Lambda_{\bm{x}}(\mathcal{I})\neq \emptyset\right\}\subseteq 
\Lambda_{\bm{x}}(\mathcal{I}, \mathscr{F}).
$$
\end{prop}

It is worth remarking that, differently from the case of $(\mathcal{I}, \mathscr{F})$-cluster points in Theorem \ref{thm:01thmclusterrough}, here it is hopeless to try to obtain an inclusion of the type $$\Lambda_{\bm{x}}(\mathcal{I}, \mathscr{F})\subseteq \left\{\eta \in X: \tilde{F}_\eta \cap \Lambda_{\bm{x}}(\mathcal{I})\neq \emptyset\right\},$$ where $\tilde{F}_\eta$ is a set depending on $F_\eta$. Indeed, as follows from the next example, it is possible (also in compact metric spaces) that $\Lambda_{\bm{x}}(\mathcal{I})$ is empty while $\Lambda_{\bm{x}}(\mathcal{I}, \mathscr{F})$ is not. This goes in the opposite direction of 
%the case of rough $\mathcal{I}$-cluster points in 
Corollary \ref{cor:nonempty}. 
\begin{example}
Set $X=\mathbb{R}$ with the standard topology, and let $\bm{x}$ be a real sequence which is equidistributed in $[0,1]$; for instance, one may consider Fridy's sequence in \cite[Example 4]{MR1181163} or, more generally, the sequences constructed in the proof of \cite[Theorem 3.1]{MR3883171}. Let also $\mathscr{F}$ be a rough family such that $\eta$ is an interior point of $F_\eta$, for each $\eta \in X$; for instance, $F_\eta$ might be $B_{r(\eta)}(\eta)$ with $r$ being a strictly positive map. Choosing the ideal $\mathcal{Z}$ of asymptotic density zero sets, it easily follows that
$$
[0,1]\subseteq \Lambda_{\bm{x}}(\mathcal{Z}, \mathscr{F})
\quad \text{ and }\quad 
\Lambda_{\bm{x}}(\mathcal{Z})=\emptyset;
$$
cf. the argument contained in \cite[Example 4]{MR1181163}.
\end{example}

Lastly, the connection between $(\mathcal{I}, \mathscr{F})$-limit points and classical $\mathcal{I}$-limit points is given in the next result. To this aim, given a sequence $\bm{x}=(x_n \in \omega)$ with values in a metric space $X$ and a nonempty subset $Y \subseteq X$, we define the real sequence 
$$
d(\bm{x}, Y):=\left(d(x_n,Y): n \in \omega\right),
$$
where $d$ is the metric on $X$ and $d(x_n,Y):=\inf\{d(x_n,y): y \in Y\}$ for each $n \in \omega$. 
\begin{thm}\label{thm:ilimitpointcharact}
Let $\mathcal{I}$ be an ideal on $\omega$, $X$ be a metric space with the $\mathsf{UC}$-property, and fix a point $\eta \in X$. Pick also a rough family $\mathscr{F}$ such that $F_\eta$ is closed. Then 
$$
\forall \bm{x} \in X^\omega, \quad \quad 
\eta \in \Lambda_{\bm{x}}(\mathcal{I}, \mathscr{F}) 
\quad \text{ if and only if }\quad 
0 \in \Lambda_{d(\bm{x}, F_\eta)}(\mathcal{I}).
$$
\end{thm}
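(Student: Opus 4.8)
The plan is to prove both implications by unwinding the definition of $(\mathcal{I},\mathscr{F})$-limit point in terms of the auxiliary distance sequence $d(\bm{x},F_\eta)$. Recall that $\eta \in \Lambda_{\bm{x}}(\mathcal{I},\mathscr{F})$ means there exists $S \in \mathcal{I}^+$ such that the subsequence $(x_n : n \in S)$ is $\mathscr{F}$-convergent to $\eta$, which by Definition~\ref{defi:mainroughness} with $\mathcal{I}=\mathrm{Fin}$ means $\{n \in S : x_n \notin U\}$ is finite for every open $U \supseteq F_\eta$. On the other hand, $0 \in \Lambda_{d(\bm{x},F_\eta)}(\mathcal{I})$ means there exists $T \in \mathcal{I}^+$ with $(d(x_n, F_\eta) : n \in T)$ converging to $0$ in the ordinary sense. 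The whole point is to translate ``$x_n$ eventually enters every open neighbourhood of the closed set $F_\eta$'' into ``$d(x_n, F_\eta) \to 0$,'' and the bridge between these two formulations is exactly the \textsc{UC}-property together with the closedness of $F_\eta$.

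First I would prove the easier direction, $0 \in \Lambda_{d(\bm{x},F_\eta)}(\mathcal{I}) \Rightarrow \eta \in \Lambda_{\bm{x}}(\mathcal{I},\mathscr{F})$. Take $T \in \mathcal{I}^+$ with $d(x_n, F_\eta) \to 0$ along $T$. Given any open $U \supseteq F_\eta$, I need $\{n \in T : x_n \notin U\}$ to be finite. Here the \textsc{UC}-property enters: the two disjoint closed sets $F_\eta$ and $X \setminus U$ are at positive distance $\varepsilon > 0$, so $x_n \notin U$ forces $d(x_n, F_\eta) > \varepsilon$, which can happen only for finitely many $n \in T$ since $d(x_n,F_\eta)\to 0$ along $T$. (A mild caveat: if $X \setminus U$ is empty, i.e.\ $U = X$, the condition is vacuous; this degenerate case must be dispatched separately.) Hence $(x_n : n \in T)$ is $\mathscr{F}$-convergent to $\eta$ with the same witness $S := T$, giving $\eta \in \Lambda_{\bm{x}}(\mathcal{I},\mathscr{F})$.

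For the converse, suppose $\eta \in \Lambda_{\bm{x}}(\mathcal{I},\mathscr{F})$, witnessed by $S \in \mathcal{I}^+$. I claim $d(x_n, F_\eta) \to 0$ along $S$, which directly gives $0 \in \Lambda_{d(\bm{x},F_\eta)}(\mathcal{I})$ with $T := S$. Fix $\varepsilon > 0$ and consider the open $\varepsilon$-neighbourhood $U_\varepsilon := \{y \in X : d(y, F_\eta) < \varepsilon\}$, which is open and contains $F_\eta$. By $\mathscr{F}$-convergence along $S$, the set $\{n \in S : x_n \notin U_\varepsilon\}$ is finite, meaning $d(x_n, F_\eta) < \varepsilon$ for all but finitely many $n \in S$; since $\varepsilon$ was arbitrary this is precisely $d(x_n,F_\eta)\to 0$ along $S$. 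Notice that this direction does \emph{not} actually require the \textsc{UC}-property, only that $F_\eta$ is closed (used implicitly so that $d(\cdot,F_\eta)$ vanishes exactly on $F_\eta$, though the argument works regardless).

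The main obstacle is the forward direction of the easy implication, precisely the step where one must deduce from $x_n \notin U$ that $d(x_n,F_\eta)$ is bounded below. Without the \textsc{UC}-property this fails in general: in a space like $\mathbb{R}^k$ a point can lie outside an open set $U \supseteq F_\eta$ yet have distance to $F_\eta$ arbitrarily close to $0$ (think of $U$ hugging $F_\eta$ very tightly far away), so one cannot pass from the neighbourhood-filter formulation to the metric-distance formulation. The \textsc{UC}-property is exactly the hypothesis that rules this out by guaranteeing a uniform positive gap between the disjoint closed sets $F_\eta$ and $X\setminus U$, and I expect the careful invocation of this gap---including the boundary case $U = X$---to be the only genuinely delicate point; everything else is routine unwinding of definitions.
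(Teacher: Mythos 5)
Your proof is correct and follows essentially the same route as the paper: the forward direction unwinds $\mathscr{F}$-convergence along the witnessing $\mathcal{I}$-positive set using the open $\varepsilon$-neighbourhoods of $F_\eta$, and the reverse direction uses the \textsc{UC}-property to fit such a neighbourhood inside an arbitrary open $U\supseteq F_\eta$. Your additional remarks (the degenerate case $U=X$ and the observation that the \textsc{UC}-property is needed only in one direction) are accurate and consistent with the paper's Remark~\ref{rmk:ICproperytyequivalence}.
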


It turns out that the latter equivalence characterizes the \textsf{UC}-property of $X$, as we show in Remark \ref{rmk:ICproperytyequivalence} below.
\begin{rmk}
    In the special case where $X=\mathbb{R}$ (which does not have the \textsc{UC}-property) and there exists a radius $r \in [0,\infty)$ such that $F_\eta=B_r(\eta)$ for all $\eta \in \mathbb{R}$, the analogue statements of Proposition \ref{prop:Pideal_equiv_rough_Istar}, \cite[Corollary 1.7]{LeonettiJCA}, Lemma \ref{lem:hattaucontinuous}, Corollary \ref{cor:UCpropertycluster}, and Theorem \ref{thm:ilimitpointcharact} hold as well. In fact, in each case, it is enough to observe that if $U$ is open set containing $F_\eta$ then there exists $k \in \omega$ such that $F_\eta\subseteq \{x \in \mathbb{R}: d(x,F_\eta)<2^{-k}\}\subseteq U$. 
\end{rmk}

\subsection{Relationships} 
In this section, we show some relationships among the four main notions of this work. Of course, we already observed the trivial inclusions \eqref{eq:trivialinclusion1} and \eqref{eq:trivialinclusion2}. 

Now, we show that the set $\mathsf{Lim}_{\bm{x}}(\mathcal{I}^\star, \mathscr{F})$ can be much smaller than all the other ones.
\begin{prop}\label{prop:existence}
    There exist a sequence $\bm{x}$ with values in a compact metric space $X$, a rough family $\mathscr{F}$ made of closed sets, and an ideal $\mathcal{I}$ on $\omega$ such that 
    $$
    \mathsf{Lim}_{\bm{x}}(\mathcal{I}^\star, \mathscr{F})=\emptyset 
    \quad \text{ and }\quad \mathsf{Lim}_{\bm{x}}(\mathcal{I}, \mathscr{F})=\Gamma_{\bm{x}}(\mathcal{I}, \mathscr{F})=\Lambda_{\bm{x}}(\mathcal{I}, \mathscr{F})=X.
    $$
\end{prop}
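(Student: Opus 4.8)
The plan is to give a fully explicit example built on the Fubini-type ideal $\mathcal{I} := \mathrm{Fin}\times\mathrm{Fin}$, the prototypical non-$P$-ideal, concentrating all the roughness at a single distinguished point. I would identify $\omega$ with $\omega\times\omega$, so that a set $A$ lies in $\mathcal{I}$ exactly when all but finitely many of its columns $A_i:=\{j:(i,j)\in A\}$ are finite, while $A\in\mathcal{I}^+$ exactly when infinitely many columns $A_i$ are infinite. For the space I would take the convergent-sequence space $X:=\{0\}\cup\{2^{-i}:i\in\omega\}\subseteq\R$ with its usual metric, which is compact, fix the distinguished point $c:=0$, and set $\mathscr{F}:=\{F_\eta:\eta\in X\}$ with $F_\eta:=\{0,\eta\}$. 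Each $F_\eta$ is finite, hence closed, and contains $\eta$, so $\mathscr{F}$ is an admissible rough family of closed sets. Finally I define $\bm{x}$ by $x_{(i,j)}:=0$ if $j$ is even and $x_{(i,j)}:=2^{-i}$ if $j$ is odd, so that $\bm{x}\in X^\omega$ and $0\in F_\eta$ for every $\eta$.

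First I would check that $\bm{x}$ is classically $\mathcal{I}$-convergent to $0$: for every $\varepsilon>0$ the set $\{(i,j):x_{(i,j)}\ge\varepsilon\}$ equals $\{i:2^{-i}\ge\varepsilon\}\times\{j\text{ odd}\}$, which has only finitely many (infinite) columns and therefore belongs to $\mathcal{I}$. Since $0\in F_\eta\subseteq U$ for every $\eta$ and every open $U\supseteq F_\eta$, this gives $\{n:x_n\notin U\}\in\mathcal{I}$ at once, so $\eta\in\mathsf{Lim}_{\bm{x}}(\mathcal{I},\mathscr{F})$ for all $\eta$; hence $\mathsf{Lim}_{\bm{x}}(\mathcal{I},\mathscr{F})=X$, and by the trivial inclusion $\mathsf{Lim}_{\bm{x}}(\mathcal{I},\mathscr{F})\subseteq\Gamma_{\bm{x}}(\mathcal{I},\mathscr{F})\subseteq X$ also $\Gamma_{\bm{x}}(\mathcal{I},\mathscr{F})=X$.

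For $\Lambda_{\bm{x}}(\mathcal{I},\mathscr{F})=X$ I would use the single $\mathcal{I}$-positive set $S:=\{(i,j):j\text{ even}\}$, all of whose columns are infinite, so $S\in\mathcal{I}^+$. Along $S$ the sequence is identically $0$, so for every $\eta$ and every open $U\supseteq F_\eta\ni 0$ the set $\{n\in S:x_n\notin U\}$ is empty; hence $(x_n:n\in S)$ is $\mathscr{F}$-convergent to $\eta$, giving $\eta\in\Lambda_{\bm{x}}(\mathcal{I},\mathscr{F})$ for every $\eta\in X$.

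The main obstacle, and the heart of the statement, is $\mathsf{Lim}_{\bm{x}}(\mathcal{I}^\star,\mathscr{F})=\emptyset$: that no co-$\mathcal{I}$ subsequence $\mathscr{F}$-converges to any $\eta$. Here I would exploit that $\mathcal{I}$ is not a $P$-ideal via a pseudo-union argument. Fixing $\eta\in X$ and the nested open neighbourhoods $U_k:=\{x\in X:d(x,F_\eta)<2^{-k}\}\supseteq F_\eta$, for large $k$ the sets $B_k:=\{n:x_n\notin U_k\}$ equal the staircase $\{i\le k\}\times\{j\text{ odd}\}$, except that the single column carrying the value $\eta$ must be removed when $\eta\neq 0$; these are increasing in $k$ and each lies in $\mathcal{I}$. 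An $\mathcal{I}^\star$-witness for $\eta$ would be some $S'\in\mathcal{I}$ with $B_k\setminus S'$ finite for all $k$; taking $k=i$ forces $S'$ to contain cofinitely many odd entries of column $i$ for every $i$ (all $i\neq i_0$ in the case $\eta=2^{-i_0}$), so $S'$ would have infinitely many infinite columns, contradicting $S'\in\mathcal{I}$. Thus no such $S'$ exists and $\eta\notin\mathsf{Lim}_{\bm{x}}(\mathcal{I}^\star,\mathscr{F})$; as $\eta$ is arbitrary, the set is empty. I expect the only delicate point to be the bookkeeping in this last step, namely matching the neighbourhoods $U_k$ to the staircase sets $B_k$ and handling the case $\eta\neq 0$, where the column of value $\eta$ has to be excised.
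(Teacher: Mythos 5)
Your construction is correct and takes essentially the same approach as the paper's proof: the same ideal $\mathrm{Fin}\times\mathrm{Fin}$, the same compact space (a convergent sequence together with its limit), and a rough family in which every $F_\eta$ contains the unique accumulation point; your sequence differs only in placing the accumulation point itself on the even rows (the paper places the column label there and the row index on odd rows), which if anything simplifies the verification of $\Lambda_{\bm{x}}(\mathcal{I},\mathscr{F})=X$ via a constant positive subsequence. The one bookkeeping point you flag is real but harmless: for a fixed $i$ with $2^{-i}\notin F_\eta$ one should take $k$ large enough that $2^{-k}\le d(2^{-i},F_\eta)$ (not necessarily $k=i$), which is possible since $F_\eta$ is closed and $2^{-i}\notin F_\eta$, and the contradiction with $S'\in\mathcal{I}$ then goes through exactly as you describe.
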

It turns out that the construction in the proof of Proposition \ref{prop:existence} satisfies also two additional properties: the map $\eta\to F_\eta$ is $\widehat{\tau}$ continuous, and there exist a point $\eta \in X$ and a sequence $\bm{y} \in X^\omega$ such that $\lim_k y_k=\eta$ and $y_k \notin F_\eta$ for all $k \in \omega$. Hence, as follows by Proposition \ref{prop:Pideal_equiv_rough_Istar}, $\mathcal{I}$ is necessarily not a $P$-ideal (in fact, it will be a copy on $\omega$ of the Fubini product $\mathrm{Fin}\times \mathrm{Fin}$); see Remark \ref{rmk:hattaucontinuous} below. 

It is known, in the classical case where $\mathscr{F}=\mathscr{F}^\natural$, that the vector space of bounded $\mathcal{I}$-convergent real sequences coincides with the closure (with respect to the supremum norm in the Banach space $\ell_\infty$) of the subspace of bounded $\mathcal{I}^\star$-convergent real sequences, see \cite[Theorem 2.4]{MR2181783}. In the following, we provide a positive extension of the latter result in the context of $(\mathcal{I}, \mathscr{F})$-convergence. 
To this aim, 
we write 
$$
c^b(\mathcal{I}, \mathscr{F}):=\left\{\bm{x} \in 
\ell_\infty:  
\mathsf{Lim}_{\bm{x}}(\mathcal{I}, \mathscr{F})\neq \emptyset\right\}, 
$$
adapting the notation from \cite[Section 1]{MR4947872}. 
It is worth remarking that $c^b(\mathcal{I}, \mathscr{F})$ is \emph{not} a vector space in all \textquotedblleft interesting\textquotedblright\, cases where $\mathscr{F}$ is not degenerate, with the same argument used in \cite[Proposition 1.5]{LeonettiJCA}. Lastly, to state our next result, we define also 
$$
c^b(\mathcal{I}^\star, \mathscr{F}):=\left\{\bm{x} \in \ell_\infty: 
\mathsf{Lim}_{\bm{x}}(\mathcal{I}^\star, \mathscr{F})\neq \emptyset\right\}. 
$$
Of course, $c\subseteq c^b(\mathcal{I}^\star, \mathscr{F})\subseteq c^b(\mathcal{I}, \mathscr{F})\subseteq \ell_\infty$ and, under certain mild assumptions, 
the equality $c^b(\mathcal{I}^\star, \mathscr{F})= c^b(\mathcal{I}, \mathscr{F})$  holds 
if $\mathcal{I}$ is a $P$-ideal, thanks to Proposition \ref{prop:Pideal_equiv_rough_Istar}. 
\begin{thm}\label{thm:closure}
Set $X=\mathbb{R}$. Let $\mathcal{I}$ be an ideal on $\omega$, and pick a rough family $\mathscr{F}$ of uniformly bounded closed sets such that the map $\eta \mapsto F_\eta$ is $\widehat{\tau}$-continuous. 

Then $c^b(\mathcal{I}, \mathscr{F})$ coincides with the closure of $c^b(\mathcal{I}^\star, \mathscr{F})$. 
\end{thm}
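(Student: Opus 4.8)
The plan is to establish the two inclusions $c^b(\mathcal{I},\mathscr{F}) \subseteq \overline{c^b(\mathcal{I}^\star,\mathscr{F})}$ and $\overline{c^b(\mathcal{I}^\star,\mathscr{F})} \subseteq c^b(\mathcal{I},\mathscr{F})$ separately. Since $\mathsf{Lim}_{\bm{x}}(\mathcal{I}^\star,\mathscr{F}) \subseteq \mathsf{Lim}_{\bm{x}}(\mathcal{I},\mathscr{F})$ for every $\bm{x}$, one has $c^b(\mathcal{I}^\star,\mathscr{F}) \subseteq c^b(\mathcal{I},\mathscr{F})$, so the second inclusion will follow at once once I prove that $c^b(\mathcal{I},\mathscr{F})$ is closed in $\ell_\infty$. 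Throughout I would use that each $F_\eta$, being closed and bounded in $\mathbb{R}$, is compact; hence for $\eta \in \mathsf{Lim}_{\bm{x}}(\mathcal{I},\mathscr{F})$ the open neighbourhoods $U_\varepsilon := \{t \in \mathbb{R}: d(t, F_\eta) < \varepsilon\}$ yield the convenient reformulation that $(\mathcal{I},\mathscr{F})$-convergence to $\eta$ is equivalent to $\{n : d(x_n, F_\eta) \ge \varepsilon\} \in \mathcal{I}$ for every $\varepsilon > 0$, since every open $U \supseteq F_\eta$ contains some $U_\varepsilon$ by compactness.

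For the density inclusion, I would fix $\bm{x} \in c^b(\mathcal{I},\mathscr{F})$, a point $\eta \in \mathsf{Lim}_{\bm{x}}(\mathcal{I},\mathscr{F})$, and $\varepsilon > 0$. Setting $S := \{n : d(x_n, F_\eta) \ge \varepsilon\} \in \mathcal{I}$, define $\bm{y}$ by leaving $y_n := x_n$ for $n \in S$ and replacing $y_n$ by a nearest point of the compact set $F_\eta$ to $x_n$ for $n \notin S$. Then $\|\bm{y} - \bm{x}\|_\infty \le \varepsilon$, because the only modified coordinates are those with $d(x_n, F_\eta) < \varepsilon$, while the subsequence $(y_n : n \in \omega\setminus S)$ lies entirely in $F_\eta$ and hence $\mathscr{F}$-converges to $\eta$ (every open $U \supseteq F_\eta$ contains all of its terms, so the relevant exceptional set is empty). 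Thus $\bm{y} \in c^b(\mathcal{I}^\star,\mathscr{F})$ with witness $S$, and letting $\varepsilon \downarrow 0$ shows $\bm{x} \in \overline{c^b(\mathcal{I}^\star,\mathscr{F})}$. Note this step uses only that each $F_\eta$ is closed and bounded.

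It remains to prove that $c^b(\mathcal{I},\mathscr{F})$ is closed, which I expect to be the main obstacle. I would take $\bm{x}^{(m)} \in c^b(\mathcal{I},\mathscr{F})$ with $\bm{x}^{(m)} \to \bm{x}$ in $\ell_\infty$ and choose $\eta_m \in \mathsf{Lim}_{\bm{x}^{(m)}}(\mathcal{I},\mathscr{F})$. First I would show $(\eta_m)$ is bounded: the family $\{\bm{x}^{(m)}\}$ is uniformly bounded, say by $C$, and since $\omega \notin \mathcal{I}$ some term of $\bm{x}^{(m)}$ lies within any $\varepsilon$ of $F_{\eta_m}$; the uniform diameter bound $M := \sup_\eta \operatorname{diam}(F_\eta) < \infty$ together with $\eta_m \in F_{\eta_m}$ then yields $|\eta_m| \le C + M + \varepsilon$. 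Passing to a subsequence I may assume $\eta_m \to \eta \in \mathbb{R}$.

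To conclude $\eta \in \mathsf{Lim}_{\bm{x}}(\mathcal{I},\mathscr{F})$, I would fix an open $U \supseteq F_\eta$ and choose $\varepsilon > 0$ with $U_\varepsilon \subseteq U$ (compactness of $F_\eta$). By the $\widehat{\tau}$-continuity of $\eta \mapsto F_\eta$ applied to the open set $U_{\varepsilon/2} \supseteq F_\eta$, there is $\delta > 0$ with $F_{\eta'} \subseteq U_{\varepsilon/2}$ whenever $|\eta' - \eta| < \delta$; pick $m$ large enough that simultaneously $|\eta_m - \eta| < \delta$ and $\|\bm{x}^{(m)} - \bm{x}\|_\infty < \varepsilon/2$. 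Then $F_{\eta_m} \subseteq U_{\varepsilon/2}$, so $\{n : d(x^{(m)}_n, F_\eta) \ge \varepsilon/2\} \in \mathcal{I}$, and for every $n$ outside this set the triangle inequality for the distance to $F_\eta$ gives $d(x_n, F_\eta) \le \|\bm{x}^{(m)} - \bm{x}\|_\infty + d(x^{(m)}_n, F_\eta) < \varepsilon$, i.e. $x_n \in U_\varepsilon \subseteq U$. Hence $\{n : x_n \notin U\} \in \mathcal{I}$; as $U$ was arbitrary, $\eta \in \mathsf{Lim}_{\bm{x}}(\mathcal{I},\mathscr{F})$ and $\bm{x} \in c^b(\mathcal{I},\mathscr{F})$. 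Combining this with the density inclusion finishes the proof. The delicate points are the uniform control of the auxiliary limits $\eta_m$ (which is exactly where the uniform boundedness hypothesis enters) and the two-scale $\varepsilon/2$ argument that lets $\widehat{\tau}$-continuity and sup-norm closeness cooperate.
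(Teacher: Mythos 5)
Your proposal is correct and follows essentially the same route as the paper: prove the density inclusion by replacing the $\mathcal{I}$-many coordinates near $F_\eta$ with nearby points of $F_\eta$, and prove closedness of $c^b(\mathcal{I},\mathscr{F})$ by extracting a convergent subsequence of auxiliary limits $\eta_m$ (bounded via the uniform boundedness of the $F_\eta$) and running the same two-scale $\varepsilon/2$ argument combining $\widehat{\tau}$-continuity with sup-norm closeness. The only differences are cosmetic (nearest-point projection versus an arbitrary $2^{-k}$-close point, and diameter versus radius in the boundedness estimate).
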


Now, we show that, under certain restrictions, both maps $\bm{x}\mapsto \mathsf{Lim}_{\bm{x}}(\mathcal{I}, \mathscr{F})$ and $\bm{x}\mapsto \Gamma_{\bm{x}}(\mathcal{I}, \mathscr{F})$ are continuous on $\ell_\infty$.
\begin{thm}\label{tmm:uppersemicrimpliesGammaLimcont}
Set $X=\mathbb{R}$. Let $\mathcal{I}$ be an ideal on $\omega$ and fix a bounded and upper semicontinuous function $r: X\to [0,\infty)$ such that $F_\eta=B_{r(\eta)}(\eta)$ for all $\eta \in X$. Then the maps $\Phi: c^b(\mathcal{I}, \mathscr{F}) \to \mathcal{H}(X)$ and $\Psi: \ell_\infty \to \mathcal{H}(X)$ defined by 
$$
\forall \bm{x} \in \ell_\infty, \quad \quad 
\Phi(\bm{x}):=\mathsf{Lim}_{\bm{x}}(\mathcal{I}, \mathscr{F}) 
\quad \text{ and }\quad 
\Psi(\bm{x}):=\Gamma_{\bm{x}}(\mathcal{I}, \mathscr{F}) 
$$
are both $\widehat{\tau}$-continuous. 
\end{thm}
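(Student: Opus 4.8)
The plan is to prove the $\widehat{\tau}$-continuity of both maps by reducing them to the characterizations established earlier in the paper, and then exploiting the geometric structure of the sets $F_\eta=B_{r(\eta)}(\eta)$ together with the boundedness and upper semicontinuity of $r$. First I would recall the working description of $\widehat{\tau}$-continuity: a map $\bm{x}\mapsto G(\bm{x})$ into $\mathcal{H}(X)$ is $\widehat{\tau}$-continuous at $\bm{x}_0$ if for every open $U\supseteq G(\bm{x}_0)$ there is a neighborhood (in the $\ell_\infty$-norm) of $\bm{x}_0$ whose image stays inside $\{F:F\subseteq U\}$. So the entire task splits into two upper-semicontinuity-type statements, one for $\Psi$ and one for $\Phi$, and in each case I must show that small $\sup$-norm perturbations of the sequence cannot push the cluster/limit set outside a prescribed open superset.

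For the map $\Psi(\bm{x})=\Gamma_{\bm{x}}(\mathcal{I},\mathscr{F})$, I would use Corollary \ref{cor:UCpropertycluster} (which gives that each $\Gamma_{\bm{x}}(\mathcal{I},\mathscr{F})$ is genuinely closed, so the map lands in $\mathcal{H}(X)$ once nonempty, and handle emptiness separately) together with Theorem \ref{thm:01thmclusterrough}, which identifies $\Gamma_{\bm{x}}(\mathcal{I},\mathscr{F})=\{\eta:B_{r(\eta)}(\eta)\cap\Gamma_{\bm{x}}(\mathcal{I})\neq\emptyset\}$ since each $F_\eta$ is closed and, $r$ being bounded, the image of $\bm{x}$ sits in a fixed ball, hence in a compact set when intersected appropriately; more carefully, boundedness of $\bm{x}\in\ell_\infty$ and of $r$ guarantees $\Gamma_{\bm{x}}(\mathcal{I})$ is a nonempty compact subset of a fixed bounded interval. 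The key quantitative step is a comparison of the ordinary cluster sets: if $\|\bm{x}-\bm{x}^\prime\|_\infty<\delta$, then $\Gamma_{\bm{x}^\prime}(\mathcal{I})$ is contained in the closed $\delta$-neighborhood of $\Gamma_{\bm{x}}(\mathcal{I})$, because every cluster point of the perturbed sequence lies within $\delta$ of a cluster point of the original one. Combining this with the distance-form condition $d(\eta,\Gamma_{\bm{x}}(\mathcal{I}))\le r(\eta)$ and the upper semicontinuity of $r$, I would show that the set $\{\eta:d(\eta,\Gamma_{\bm{x}^\prime}(\mathcal{I}))\le r(\eta)\}$ is contained in any preassigned open neighborhood $U$ of $\{\eta:d(\eta,\Gamma_{\bm{x}}(\mathcal{I}))\le r(\eta)\}$ once $\delta$ is small enough.

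For the map $\Phi(\bm{x})=\mathsf{Lim}_{\bm{x}}(\mathcal{I},\mathscr{F})$, I would run the parallel argument using the characterization $\mathsf{Lim}_{\bm{x}}(\mathcal{I},\mathscr{F})=\{\eta:\Gamma_{\bm{x}}(\mathcal{I})\subseteq F_\eta\}=\{\eta:\Gamma_{\bm{x}}(\mathcal{I})\subseteq B_{r(\eta)}(\eta)\}$ coming from \cite[Corollary 1.4]{LeonettiJCA} (applicable since $\bm{x}\in\ell_\infty$ has relatively compact image in $\mathbb{R}$, $X=\mathbb{R}$ is regular, and each $F_\eta$ is closed). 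In distance terms this reads $\sup_{c\in\Gamma_{\bm{x}}(\mathcal{I})}d(\eta,c)\le r(\eta)$, i.e. $\Gamma_{\bm{x}}(\mathcal{I})$ lies in the closed ball. The same Hausdorff-type perturbation estimate—now I need both inclusions, namely that $\Gamma_{\bm{x}^\prime}(\mathcal{I})$ and $\Gamma_{\bm{x}}(\mathcal{I})$ are within Hausdorff distance at most $\delta$ when $\|\bm{x}-\bm{x}^\prime\|_\infty<\delta$—lets me control how the condition $\Gamma_{\bm{x}^\prime}(\mathcal{I})\subseteq B_{r(\eta)}(\eta)$ deforms, and together with upper semicontinuity of $r$ I conclude that $\Phi(\bm{x}^\prime)\subseteq U$ for any open $U\supseteq\Phi(\bm{x})$ and $\delta$ small. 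I expect the main obstacle to be the interaction between the upper semicontinuity of $r$ (which only gives one-sided control, $r(\eta^\prime)\le r(\eta)+\varepsilon$ for $\eta^\prime$ near $\eta$, not the reverse) and the direction of the inclusions defining each set: I must verify that semicontinuity pushes in the favorable direction for \emph{both} $\Phi$ and $\Psi$, and carefully treat the boundary case where $\mathsf{Lim}_{\bm{x}}(\mathcal{I},\mathscr{F})$ could be empty (so that $\Phi$ is only defined on $c^b(\mathcal{I},\mathscr{F})$) versus $\Gamma_{\bm{x}}(\mathcal{I},\mathscr{F})$, which is automatically nonempty on $\ell_\infty$ by Corollary \ref{cor:nonempty}. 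The boundedness of $r$ is precisely what keeps all relevant sets inside a fixed compact interval, ensuring the cluster sets are compact and the Hausdorff estimates are uniform; this is where the contrast with the unbounded case of Example \ref{ex:negative} enters.
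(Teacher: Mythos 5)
Your proposal follows essentially the same route as the paper: both express $\Phi(\bm{x})$ and $\Psi(\bm{x})$ as sublevel sets of the margin functions $\sup_{a\in\Gamma_{\bm{x}}(\mathcal{I})}|a-\eta|-r(\eta)$ and $d(\eta,\Gamma_{\bm{x}}(\mathcal{I}))-r(\eta)$ via \cite[Corollary 1.4]{LeonettiJCA} and Theorem \ref{thm:01thmclusterrough}, use the Hausdorff stability of $\Gamma_{\bm{x}}(\mathcal{I})$ under $\ell_\infty$-perturbations, and invoke boundedness of $r$ to confine everything to a compact interval. The directional worry you flag resolves favorably, exactly as in the paper: upper semicontinuity of $r$ makes both margin functions lower semicontinuous, so they attain a strictly positive minimum on the compact part of the complement of $U$, which is the uniform gap the perturbation estimate needs.
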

Of course, by the previous observations, both maps $\Phi$ and $\Psi$ are well defined. The next example shows that, differently from \cite[Corollary 1.7]{LeonettiJCA} and Corollary \ref{cor:UCpropertycluster}, the hypothesis that $r$ is bounded cannot be omitted (even if $r$ is continuous and $\mathcal{I}=\mathrm{Fin}$). 
\begin{example}\label{ex:negative}
Suppose that $X=\mathbb{R}$, $\mathcal{I}=\mathrm{Fin}$, and the rough family $\mathscr{F}$ is given as in Theorem \ref{tmm:uppersemicrimpliesGammaLimcont} once we consider the continuous function $r: \mathbb{R}\to [0,\infty)$ defined by
\[
\forall \eta \in \mathbb{R}, \quad 
r(\eta)=
\begin{cases}
\,0 \,\,\,& \text{ if }|\eta|\le 1,\\[2mm]
\,|\eta|-\dfrac1{|\eta|} & \text{ if } |\eta|\ge 1.
\end{cases}
\]
Let also $\bm{x}$ be the real constant sequence $0$. Using \cite[Corollary 1.4]{LeonettiJCA} and Theorem \ref{thm:01thmclusterrough}, it easily follows that 
$\Phi(\bm{x})=\mathsf{Lim}_{\bm{x}}(\mathcal{I},\mathscr{F})=\{0\}$ and $\Psi(\bm{x})=\Gamma_{\bm{x}}(\mathcal{I},\mathscr{F})=\{0\}$. 
To conclude, we claim that both maps $\Phi$ and $\Psi$ are \emph{not} $\widehat{\tau}$-continuous at $\bm{x}$.

Now fix the open set $U:=(-1,1)$. 
Then $\{0\}\subseteq U$, i.e., $\{0\}\in \widehat{U}:=\{F \in \mathcal{H}(\mathbb{R}): F\subseteq U\}$. 
It will be enough to show that, for every $\varepsilon>0$, there exists $\bm{y}\in c^b(\mathcal{I}, \mathscr{F})$ such that $\|\bm{x}-\bm{y}\|_\infty<\varepsilon$ and both $\mathsf{Lim}_{\bm{y}}(\mathcal{I},\mathscr{F})$ and $\Gamma_{\bm{y}}(\mathcal{I},\mathscr{F})$ are not contained in $U$. 
To this aim, fix $\varepsilon\in (0,1)$ and consider the sequence $\bm{y}$ which is constantly equal to $\nicefrac{\varepsilon}{2}$. 
Define $\eta:=\nicefrac{2}{\varepsilon}>2$, so that $F_{\eta}=[1/\eta, 2\eta-\nicefrac{1}{\eta}]$. 
Of course, $\|\bm{x}-\bm{y}\|_\infty<\varepsilon$ and $\Gamma_{\bm{y}}(\mathcal{I})=\{\nicefrac{\varepsilon}{2}\}=\{1/\eta\}$. 
Again by \cite[Corollary 1.4]{LeonettiJCA} and Theorem \ref{thm:01thmclusterrough}, we obtain that $\eta$ (which is not in $U$) belongs to both $\mathsf{Lim}_{\bm{y}}(\mathcal{I},\mathscr{F})$ and $\Gamma_{\bm{y}}(\mathcal{I},\mathscr{F})$. 
\end{example}

Next, we characterize the family of ideals for which the notions of $(\mathcal{I},\mathscr{F})$-cluster points and $(\mathcal{I},\mathscr{F})$-limit points coincide for every sequence. In the special case where $\mathscr{F}$ is the degenerate family $\mathscr{F}^\natural$, it is known that the equivalence holds whenever $\mathcal{I}$ is an $F_\sigma$-ideal, see \cite[Theorem 2.3]{MR3883171} or \cite[Proposition 5.4]{MR4967065}. In addition, if $\mathcal{I}$ is an analytic $P$-ideal, then the equivalence holds if and only if $\mathcal{I}$ is $F_\sigma$, see \cite[Theorem 2.5]{MR3883171}. A definitive answer for the degenerate case has been obtained in \cite[Theorem 3.4]{MR4393937}: $\mathcal{I}$-cluster points and $\mathcal{I}$-limit points coincide for every sequence if and only if $\mathcal{I}$ is a $P^+$-ideal. 

Here, we recall that an ideal $\mathcal{I}$ on $\omega$ is said to be $P^+$\emph{-ideal} if for every decreasing sequence $(A_n:n \in \omega)$ of $\mathcal{I}$-positive sets there exists $A\in \mathcal{I}^+$ such that $A\setminus A_n$ is finite for every $n \in \omega$. It is known that all $G_{\delta\sigma}$-ideals are $P^+$, see \cite[Proposition 3.2]{FKL2026}; cf. also \cite[Lemma 1.2]{MR748847} for the original proof in the case of $F_\sigma$-ideals. Remarkably, $\mathcal{Z}$ is not a $P^+$-ideal. 
Below we obtain the analogue characterization of \cite[Theorem 3.4]{MR4393937} in the context of $(\mathcal{I},\mathscr{F})$-convergence.

\begin{thm}\label{thm:characterizationPplus}
Let $\mathcal{I}$ be an ideal on $\omega$. Let $X$ be a compact metric space and $\mathscr{F}$ 
be a rough family of closed sets such that there exist $\eta\in X$ and a sequence $\bm{y} \in X^\omega$ for which 
$\lim_k y_k=\eta$ and $y_k\notin F_{\eta}$ for all $k\in\omega$. 

Then 
$\mathcal{I}$ is a $P^+$-ideal if and only if $\Lambda_{\bm{x}}(\mathcal{I},\mathscr{F})=\Gamma_{\bm{x}}(\mathcal{I},\mathscr{F})$ for every $\bm{x}\in X^\omega$\textup{.}
\end{thm}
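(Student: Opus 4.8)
The plan is to prove both implications by reducing the rough notions to a one-dimensional statement about the distance to $F_\eta$, exploiting that in a compact metric space the metric neighborhoods $U_k:=\{x \in X: d(x,F_\eta)<2^{-k}\}$ of a closed set $F_\eta$ form a decreasing base: since $F_\eta$ is compact, every open $U\supseteq F_\eta$ contains some $U_k$ (two disjoint compact sets lie at a positive distance, exactly as in the \textsc{UC}-property). Throughout I use that the inclusion $\Lambda_{\bm{x}}(\mathcal{I},\mathscr{F})\subseteq \Gamma_{\bm{x}}(\mathcal{I},\mathscr{F})$ of \eqref{eq:trivialinclusion1} always holds, so that the asserted equality is equivalent to the reverse inclusion $\Gamma_{\bm{x}}(\mathcal{I},\mathscr{F})\subseteq \Lambda_{\bm{x}}(\mathcal{I},\mathscr{F})$ for every $\bm{x}\in X^\omega$.

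For the implication ``$P^+\Rightarrow$ equality'', fix $\bm{x}$ and $\eta\in\Gamma_{\bm{x}}(\mathcal{I},\mathscr{F})$. I would set $A_k:=\{m\in\omega: x_m\in U_k\}$; by the definition of cluster point each $A_k\in\mathcal{I}^+$, and the $A_k$ are decreasing because the $U_k$ are. Applying the $P^+$ property to $(A_k)$ yields $A\in\mathcal{I}^+$ with $A\setminus A_k$ finite for every $k$. It then remains to check that the subsequence $(x_m: m\in A)$ is $\mathscr{F}$-convergent to $\eta$: for an arbitrary open $U\supseteq F_\eta$ pick $k$ with $U_k\subseteq U$, so that $\{m\in A: x_m\notin U\}\subseteq A\setminus A_k$ is finite. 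Hence $\eta\in\Lambda_{\bm{x}}(\mathcal{I},\mathscr{F})$, which is the desired inclusion. Note that this direction uses neither the witnessing pair $(\eta,\bm{y})$ nor anything beyond compactness and closedness of the sets $F_\eta$.

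For the converse I argue by contraposition: assuming $\mathcal{I}$ is not $P^+$, I build a single sequence $\bm{x}$ for which the point $\eta$ from the hypothesis is a cluster point but not a limit point. Start from a decreasing witness $(A_n)$ of $\mathcal{I}$-positive sets admitting no $\mathcal{I}^+$ pseudo-intersection; after the routine reductions of prepending $A_0:=\omega$ and deleting $\bigcap_n A_n$ (which is forced to lie in $\mathcal{I}$, since otherwise it would itself be a pseudo-intersection), I may assume $A_0=\omega$ and $\bigcap_n A_n=\emptyset$, so that $\ell(m):=\max\{n: m\in A_n\}$ is a well-defined finite index with $A_N=\{m: \ell(m)\ge N\}$. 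Using the witnessing hypothesis I extract from $\bm{y}$ a subsequence $(z_j)$ with $z_j\notin F_\eta$ and $d(z_j,F_\eta)$ strictly decreasing to $0$ (possible because $y_k\to\eta\in F_\eta$ forces $d(y_k,F_\eta)\to 0^+$), and I define $x_m:=z_{\ell(m)}$.

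The verification then splits in two. That $\eta\in\Gamma_{\bm{x}}(\mathcal{I},\mathscr{F})$ follows since, for open $U\supseteq F_\eta$ and $\varepsilon>0$ with $U_\varepsilon:=\{x: d(x,F_\eta)<\varepsilon\}\subseteq U$, the set $\{m: x_m\in U\}$ contains $\{m: \ell(m)\ge N\}=A_N\in\mathcal{I}^+$ as soon as $d(z_j,F_\eta)<\varepsilon$ for all $j\ge N$. The crux is showing $\eta\notin\Lambda_{\bm{x}}(\mathcal{I},\mathscr{F})$: if some $S\in\mathcal{I}^+$ made $(x_m:m\in S)$ $\mathscr{F}$-convergent to $\eta$, then testing against $U_\varepsilon$ with $\varepsilon=d(z_{k-1},F_\eta)$ and using the strict monotonicity of $(d(z_j,F_\eta))$ gives the identity $\{m\in S: x_m\notin U_\varepsilon\}=S\setminus A_k$, which $\mathscr{F}$-convergence forces to be finite for every $k$; thus $S$ would be an $\mathcal{I}^+$ pseudo-intersection of $(A_n)$, contradicting the choice of the witness. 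I expect this last step to be the main obstacle: it is precisely where the genuinely rough hypothesis is indispensable, since the escaping points $z_j$ manufacture, for each threshold, an open neighborhood of $F_\eta$ that recovers exactly the level set $A_k$, and where the non-$P^+$ witness must be normalized correctly so that $\ell$ is defined and the level sets coincide with the $A_k$.
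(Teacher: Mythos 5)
Your proof is correct and takes essentially the same route as the paper: the $P^+$ direction is identical (metric neighborhoods $U_k$ of $F_\lambda$, decreasing $\mathcal{I}$-positive sets $A_k$, and the $P^+$ witness), and the contrapositive builds the same counterexample sequence from a non-$P^+$ witness $(A_k)$ and the escaping sequence $\bm{y}$. The only cosmetic difference is in how the level sets are recovered: the paper keeps $\bigcap_k A_k$ (sending those indices to $\eta$), takes $\bm{y}$ injective, and tests against the open sets $X\setminus\{y_0,\dots,y_{k-1}\}$, whereas you delete $\bigcap_k A_k$ and test against metric neighborhoods of $F_\eta$ after monotonizing $d(z_j,F_\eta)$ --- both yield $\{n: x_n\notin U\}=\omega\setminus A_k$ exactly.
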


Finally, we cannot remove the hypothesis of the existence of $\eta$ and $\bm{y}$: in fact, if $F_\lambda=X$ for all $\lambda \in X$, then $\Lambda_{\bm{x}}(\mathcal{I},\mathscr{F})=\Gamma_{\bm{x}}(\mathcal{I},\mathscr{F})=X$ for every $\bm{x}\in X^\omega$ and every ideal $\mathcal{I}$.

\section{Proofs}\label{sec:proofs}

\begin{proof}
[Proof of Proposition \ref{prop:Pideal_equiv_rough_Istar}]
\textsc{If part.} 
Let $(I_k: k \in \omega)$ be an increasing sequence of sets in $\mathcal{I}$ and set $I_\infty:=\bigcup_k I_k$. We need to show that there exists a set $I \in \mathcal{I}$ such that $I_k\setminus I$ is finite for all $k \in \omega$. If $I_\infty \in \mathcal{I}$ then it is enough to choose $I=I_\infty$. Hence, suppose hereafter that $I_\infty \in \mathcal{I}^+$. 

By hypothesis, it is possible to pick a sequence $\bm{y}\in X^\omega$ converging to some $\eta\in X$ such that $y_k \notin F_\eta$ for all $k \in \omega$. 
Passing to a subsequence if needed, we may assume without loss of generality that $\bm{y}$ is injective. 
Define the sequence $\bm{z}\in X^\omega$ by setting
$$
\forall k\in\omega,\ \forall n\in I_k\setminus I_{k-1},\qquad z_n:=y_k,
\qquad\text{and}\qquad
z_n:=\eta\ \text{ for all }n\in \omega\setminus I_\infty, 
$$
with the convention that $I_{-1}:=\emptyset$. 
We claim that $(\mathcal{I},\mathscr{F})\textup{-}\lim_n z_n=\eta$.
Indeed, let $U\subseteq X$ be an open set containing $F_\eta$.
Since $\lim_k y_k=\eta$, the set $F:=\{k\in\omega: y_k\notin U\}$ is finite.
Hence $\{n\in\omega: z_n\notin U\}\subseteq I_{\max(F\cup \{0\})} \in\mathcal{I}$, 
which proves the claim. 
%Since $X$ is a metric space (hence, in particular, Hausdorff), 
We get by the standing hypothesis that
$$
\eta\in \mathsf{Lim}_{\bm{z}}(\mathcal{I}, \mathscr{F})=\mathsf{Lim}_{\bm{z}}(\mathcal{I}^\star, \mathscr{F}).
$$
Therefore there exists $S\in\mathcal{I}$ such that the subsequence $(z_n:n\in \omega\setminus S)$ is $\mathscr{F}$-convergent to $\eta$. 
Let us show that $I_k\setminus S\in\mathrm{Fin}$ for every $k\in\omega$. To this aim, fix $k \in \omega$. 
Since $y_k\notin F_\eta$ and $X$ is metric and $F_\eta$ is closed, there exists $\varepsilon_k>0$ such that
$B_{\varepsilon_k}(y_k)\cap F_\eta=\emptyset$ (recall that $B_{\varepsilon_k}(y_k)$ is the closed ball with center $y_k$ and radius $\varepsilon_k$). 
Define
$
U_k:=X\setminus B_{\varepsilon_k}(y_k).
$ 
Then $U_k$ is an open set containing $F_\eta$. Hence 
$$
(I_k\setminus I_{k-1})\setminus S\subseteq \{n\in \omega\setminus S: z_n\in B_{\varepsilon_k}(y_k)\}=\{n\in \omega\setminus S: z_n\notin U_k\}\in\mathrm{Fin}.
$$
It follows that $I_k\setminus S=\bigcup_{j\le k}(I_j\setminus I_{j-1})\setminus S$ is finite. 

Then $I:=\bigcup_k (I_k\cap S)\subseteq S\in\mathcal{I}$ and, in addition, 
$$
\forall k \in\omega, \quad 
I_k\setminus I
\subseteq I_k \setminus (I_k\cap S)
=I_k \cap (\omega\setminus S)=I_k\setminus S \in \mathrm{Fin}.
$$
Therefore $\mathcal{I}$ is a $P$-ideal. 

\medskip

\textsc{Only If part.} 
Suppose that $\mathcal{I}$ is a $P$-ideal and fix a sequence $\bm{x} \in X^\omega$. We only need to show that $\mathsf{Lim}_{\bm{x}}(\mathcal{I}, \mathscr{F})\subseteq \mathsf{Lim}_{\bm{x}}(\mathcal{I}^\star, \mathscr{F})$ (as the converse is obvious). This is clear if $\mathsf{Lim}_{\bm{x}}(\mathcal{I}, \mathscr{F})=\emptyset$. Otherwise, pick $\lambda \in X$ such that $(\mathcal{I},\mathscr{F})\textup{-}\lim \bm{x}=\lambda$. We need to show that $(\mathcal{I}^\star,\mathscr{F})\textup{-}\lim \bm{x}=\lambda$ holds as well. 

Letting $d$ be a compatible metric on $X$, define the open sets
$$
\forall j \in \omega, \quad 
U_j:=\left\{x \in X: \inf_{y \in F_\lambda}d(x,y)<2^{-j}\right\}.
$$
Of course, $F_\lambda \subseteq U_j$ for each $j \in \omega$. In addition, if $U\subseteq X$ is an open set containing $F_\lambda$, then $X\setminus U$ and $F_\lambda$ are disjoint closed sets, hence by the \textsc{UC}-property of $X$ they are at a positive distance apart; thus, 
there exists some $j \in \omega$ such that 
$F_\lambda \subseteq U_j \subseteq U$. 

At this point, define $A_j:=\{n \in \omega: x_n \notin U_j\}$ for all $j \in \omega$. Since $\lambda \in \mathsf{Lim}_{\bm{x}}(\mathcal{I}, \mathscr{F})$, it follows that $(A_j: j \in \omega)$ is an increasing sequence of sets in $\mathcal{I}$. By the $P$-property of $\mathcal{I}$, there exists $A \in \mathcal{I}$ such that $A_j\setminus A$ is finite for all $j \in \omega$. To conclude the proof, it is enough to prove that the subsequence $(x_n: n \in \omega\setminus A)$ is $\mathscr{F}$-convergent to $\lambda$. In fact, if $U$ is an open set containing $F_\lambda$, we can pick an integer $j \in \omega$ with $F_\lambda\subseteq U_j\subseteq U$, hence 
$$
\{n\in \omega\setminus A: x_n\notin U\}\subseteq \{n\in \omega\setminus A: x_n\notin U_j\}
= A_j\setminus A\in\mathrm{Fin}.
$$
Therefore $(\mathcal{I}^\star,\mathscr{F})\textup{-}\lim_n x_n=\lambda$.
\end{proof}

\begin{rmk}
    As it is clear from the proof above, the \textsc{If part} works in every metric space (hence, not necessarily with the \textsc{UC}-property) and for every rough family $\mathscr{F}$ with at least $F_\eta$ closed (hence, not necessarily all its elements are closed). In addition, the \textsc{Only If part} works independently of the existence of $\eta\in X$ and $\bm{y}\in X^\omega$ such that $\lim_k y_k=\eta$ and $y_k \notin F_\eta$ for all $k \in \omega$. 
\end{rmk}

\medskip

\begin{proof}
    [Proof of Theorem \ref{thm:01thmclusterrough}] 
The first inclusion in \eqref{eq:firstinclusionroughclusterpoints} is obvious if there are no $\mathcal{I}$-cluster points. Otherwise, fix points $\eta,\eta^\prime \in X$ such that $\eta^\prime \in F_\eta \cap \Gamma_{\bm{x}}(\mathcal{I})$. Pick an open set $U\subseteq X$ containing $F_\eta$. Since $\eta^\prime \in U$, it follows that $\{n\in \omega: x_n \in U\}\in \mathcal{I}^+$. Hence, by the arbitrariness of $U$, we get $\eta \in \Gamma_{\bm{x}}(\mathcal{I}, \mathscr{F})$.

    To show the second inclusion in \eqref{eq:firstinclusionroughclusterpoints}, note that it is obvious if there are no $(\mathcal{I}, \mathscr{F})$-cluster points. Otherwise, pick an $(\mathcal{I}, \mathscr{F})$-cluster point $\eta \in X$ and define the closed set $C:=\overline{F_\eta}$. Fix also an arbitrary open set $V\subseteq X$ containing $C$. We claim there exists an open set $U\subseteq X$ such that 
    $$
    C\subseteq U\subseteq \overline{U}\subseteq V.
    $$
    If $C=V$ then $V$ is clopen and it is enough to choose $U=V$. Otherwise, pick $\eta^\prime \in V\setminus C$ and, by the regularity of $X$, it is possible to fix disjoint open sets $U,U^\prime\subseteq X$ such that $C\subseteq U$ and $\eta^\prime \in U^\prime$. Without loss of generality, also $U,U^\prime\subseteq V$. This implies that $X\setminus U^\prime=(X\setminus V)\cup (V\setminus U^\prime)$ is closed and contains $U$. Therefore $U\subseteq \overline{U}\subseteq V\setminus U^\prime\subseteq V$. 

    By hypothesis, there exists a compact $K\subseteq X$ such that $\{n \in \omega: x_n \notin K\} \in \mathcal{I}$. Taking into account that $\eta$ is an $(\mathcal{I}, \mathscr{F})$-cluster point, then $\{n \in \omega: x_n \in U\} \in \mathcal{I}^+$. Hence, 
    it follows that $\{n \in \omega: x_n \in \overline{U} \cap K\} \in \mathcal{I}^+$. 
    Since $\overline{U} \cap K$ is compact, we obtain by 
    \cite[Lemma 3.1(vi)]{MR3920799} that it is possible to pick a point in $\Gamma_{\bm{x}}(\mathcal{I}) \cap (\overline{U} \cap K)$. 

    It follows that, if $V\subseteq X$ is an open set containing $C$, then $$
    \Gamma_{\bm{x}}(\mathcal{I})\cap V \neq \emptyset.
    $$ To conclude, suppose for the sake of contradiction that $\Gamma_{\bm{x}}(\mathcal{I})\cap C = \emptyset$. Taking into account that $\Gamma_{\bm{x}}(\mathcal{I})$ is closed by \cite[Lemma 3.1(iv)]{MR3920799}, it would follow that $V:=X\setminus \Gamma_{\bm{x}}(\mathcal{I})$ is an open set containing $C$ and such that $\Gamma_{\bm{x}}(\mathcal{I})\cap V =\emptyset$. This is the required contradiction which proves the second inclusion in \eqref{eq:firstinclusionroughclusterpoints}. 
\end{proof}

\medskip

\begin{proof}
 [Proof of Corollary \ref{cor:familiescluster}]  
 Since $X$ is a compact metric space, then it is regular and separable. In addition, there are no sequences $\bm{x}\in X^\omega$ for which $\Gamma_{\bm{x}}(\mathcal{I})=\emptyset$, see e.g. \cite[Proposition 5.1]{FKL2026}. Recalling that all sets of the type $\Gamma_{\bm{x}}(\mathcal{I})$ are closed, we obtain by the proof of (i) $\implies$ (ii) in \cite[Theorem 6.2]{FKL2026} (which works also in finite spaces) that 
 $$
 \mathscr{C}_X(\mathcal{I}, \mathscr{F}^\natural)=\left\{C\subseteq X: C \text{ is nonempty closed}\right\}.
 $$
 The conclusion follows applying Theorem \ref{thm:01thmclusterrough}.
\end{proof}

\medskip

\begin{proof}[Proof of Theorem \ref{thm:clustersetclosed}]
If $\Gamma_{\bm{x}}(\mathcal{I}, \mathscr{F})=\emptyset$, the claim is obvious. Otherwise, pick a $\tau$-convergent net $(\eta_i)_{i \in I}$ with values in $\Gamma_{\bm{x}}(\mathcal{I}, \mathscr{F})$ and define $\eta:=\lim_i \eta_i$. Since the map $\eta\mapsto F_\eta$ is $\widehat{\tau}$-continuous, then the net $(F_{\eta_i})_{i \in I}$ is $\widehat{\tau}$-convergent to $F_\eta$. Fix an arbitrary open set $U\subseteq X$ which contains $F_\eta$ and define the $\widehat{\tau}$-open set $\widehat{U}:=\{F \in \mathcal{H}(X): F\subseteq U\}$. By the convergence of $(F_{\eta_i})_{i \in I}$, there exists $j \in I$ such that $F_{\eta_j} \in \widehat{U}$, i.e., $F_{\eta_j}\subseteq U$. Since $\eta_j \in \Gamma_{\bm{x}}(\mathcal{I}, \mathscr{F})$, it follows that $\{n \in \omega: x_n\in U\} \in \mathcal{I}^+$. Therefore $\eta \in \Gamma_{\bm{x}}(\mathcal{I}, \mathscr{F})$.
\end{proof}

\medskip 

\begin{proof}
[Proof of Proposition \ref{prop:basicIlimitpoints}]
    The claim is obvious if there are no $\eta \in X$ such that  $F_\eta \cap \Lambda_{\bm{x}}(\mathcal{I})\neq \emptyset$. Otherwise, pick $\eta,\eta^\prime \in X$ such that $\eta^\prime \in F_\eta \cap \Lambda_{\bm{x}}(\mathcal{I})$. Since $\eta^\prime$ is an $\mathcal{I}$-limit point of $\bm{x}$, there exists $S \in \mathcal{I}^+$ such that the subsequence $(x_n: n \in S)$ is convergent to $\eta^\prime$. It is enough to show that the same subsequence $(x_n: n \in S)$ witnesses that $\eta$ is an $(\mathcal{I}, \mathscr{F})$-limit point. To this aim, pick an open set $U$ containing $F_\eta$. In particular, $U$ is an open neighborhood of $\eta^\prime$. Hence $\{n \in S: x_n \notin U\}$ is finite. By the arbitrariness of $U$, this finishes the proof. 
\end{proof}

\medskip

\begin{proof}
[Proof of Theorem \ref{thm:ilimitpointcharact}]
Fix $\bm{x}\in X^\omega$ and define  $\bm{z}:=d(\bm{x},F_\eta)$. Define also $U_k:=\{y \in X: d(y,F_\eta)<2^{-k}\}$ for each $k \in \omega$. Then each $U_k$ is an open set containing $F_\eta$, and the sequence $(U_k: k \in \omega)$ is decreasing. Note also that $x_n \notin U_k$ if and only if $z_n \ge 2^{-k}$. 

\smallskip

\textsc{If part}. Suppose that $0\in \Lambda_{\bm{z}}(\mathcal{I})$.
Pick $S\in\mathcal{I}^+$ such that $(z_n:n\in S)$ converges to $0$. Then $(x_n:n\in S)$ is $\mathscr{F}$-convergent to $\eta$. Indeed, fix an open set $U\subseteq X$ containing $F_\eta$. Since both $C:=X\setminus U$ and $F_\eta$ are closed, by the \textsc{UC}-property of $X$ there exists $k \in \omega$ such that $F_\eta\subseteq U_k \subseteq U$. It follows that 
$$
\{n\in S:\ x_n\notin U\}\subseteq \{n\in S:\ x_n\notin U_k\}=\{n\in S: z_n\ge 2^{-k}\}\in\mathrm{Fin}.
$$
Therefore $\eta \in \Lambda_{\bm{x}}(\mathcal{I}, \mathscr{F})$. 

\medskip

\textsc{Only If part}. Suppose that $\eta\in \Lambda_{\bm{x}}(\mathcal{I},\mathscr{F})$.
Pick $S\in\mathcal{I}^+$ such that $(x_n:n\in S)$ is $\mathscr{F}$-convergent to $\eta$. Then $(z_n:n\in S)$ converges to $0$. Indeed, $\{n \in S: z_n \ge 2^{-k}\}=\{n \in S: x_n \notin U\} \in \mathrm{Fin}$ for each $k \in \omega$. Therefore $0 \in \Lambda_{\bm{z}}(\mathcal{I})$. 
\end{proof}

\medskip

\begin{rmk}\label{rmk:ICproperytyequivalence}
    Set $\mathcal{I}:=\mathrm{Fin}$ and let $X$ be a metric space without the \textsf{UC}-property. Then there exist a point $\eta\in X$, a rough family $\mathscr{F}$ with $F_\eta$ closed, and a sequence $\bm{x} \in X^\omega$ such that $0 \in \Lambda_{\bm{z}}(\mathcal{I})$ and $\eta \notin \Lambda_{\bm{x}}(\mathcal{I}, \mathscr{F})$, where $\bm{z}:=d(\bm{x}, F_\eta)$. 

    To this aim, since $X$ does not have the \textsf{UC}-property, it is possible to pick nonempty disjoint closed sets $A,B\subseteq X$ such that $d(A,B):=\inf\{d(x,y): x \in A, y \in B\}=0$. Fix $\eta \in A$ and define $F_\eta:=B$ and $F_{\lambda}:=\{\lambda\}$ for all $\lambda \in X\setminus \{\eta\}$. Pick also a sequence$\bm{x} \in B^\omega$ so that $x_n \in B$ and $d(x_n, A)<2^{-n}$ for each $n \in \omega$. Then $|z_n|<2^{-n}$ for all $n \in \omega$, so that $0 \in \Lambda_{\bm{z}}(\mathcal{I})$. On the other hand, if $S\subseteq \omega$ is infinite and we consider the open $U:=X\setminus B$ (which contains $A=F_\eta$), then $\{n \in S: x_n \notin U\}=S\in \mathrm{Fin}^+$. Hence $(x_n: n \in S)$ is not $\mathscr{F}$-convergent to $\eta$, i.e., $\eta \notin \Lambda_{\bm{x}}(\mathcal{I}, \mathscr{F})$.
\end{rmk}

\medskip

\begin{proof}
[Proof of Proposition \ref{prop:existence}]
Consider the one-point compactification $X:=\omega \cup \{\omega\}$ endowed with the topology such that each $n \in \omega$ is isolated, and a base of open neighborhoods at $\omega$ is $\{\omega\} \cup (\omega \setminus [0,n])$ for some $n \in \omega$. In particular, since $X$ is homeomorphic to $\{0\}\cup \{2^{-n}: n \in\omega\}$, it is a compact metric space. Now, fix a bijection $\pi:\omega\to \omega\times\omega$.
For $A\subseteq \omega\times\omega$ and $k\in\omega$, set $A_k:=\{m\in\omega:(k,m)\in A\}$.
Define the Fubini ideal $\mathrm{Fin}\times\mathrm{Fin}$ on $\omega\times\omega$ by
$$
\mathrm{Fin}\times\mathrm{Fin}:=\Bigl\{A\subseteq \omega\times\omega: \bigl|\{k\in\omega: A_k\notin\mathrm{Fin}\}\bigr|<\infty\Bigr\}.
$$
Let $\mathcal I$ be its copy on $\omega$, namely, $
\mathcal I:=\{B\subseteq \omega: \pi[B]\in \mathrm{Fin}\times\mathrm{Fin}\}$. Define a rough family $\mathscr F=\{F_\eta:\eta\in X\}$ by 
$F_\omega:=\{\omega\}$ and $F_k:=\{k,\omega\}$ for all $k \in \omega$. 
Clearly, each $F_\eta$ is closed in $X$. Next, define the sequence $\bm{x} \in X^\omega$ as follows. For each $n\in\omega$ write
$\pi(n)=(k,m)$ and set
$$
x_n:=
\begin{cases}
\,k&\text{if $m$ is even},\\
\,m&\text{if $m$ is odd}.
\end{cases}
$$

\begin{claim}\label{claim1:Lim}
$\mathsf{Lim}_{\bm{x}}(\mathcal I^\star,\mathscr F)=\emptyset$.
\end{claim}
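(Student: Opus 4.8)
The plan is to argue by contradiction. Suppose $\eta\in \mathsf{Lim}_{\bm{x}}(\mathcal{I}^\star,\mathscr{F})$ for some $\eta\in X$; then there is a set $S\in\mathcal{I}$ for which the subsequence $(x_n:n\in\omega\setminus S)$ is $\mathscr{F}$-convergent to $\eta$. Pushing everything through the bijection $\pi$, I would set $B:=\pi[S]$, so that $B\in\mathrm{Fin}\times\mathrm{Fin}$; by definition of the Fubini ideal this means the set $K_0:=\{k\in\omega: B_k\notin\mathrm{Fin}\}$ is finite. The whole argument then aims to contradict the finiteness of $K_0$, by showing that $B_v$ is infinite for \emph{every} even integer $v\neq\eta$.

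The first step is to isolate the only feature of $\bm{x}$ that matters: for each \emph{even} $v\in\omega$, an index $n$ satisfies $x_n=v$ exactly when $\pi(n)=(v,m)$ with $m$ even. Indeed, $x_n=k$ when $m$ is even and $x_n=m$ (an odd number) when $m$ is odd, so an even value $v$ can only arise through the first case, forcing $k=v$; hence the $\bm{x}$-preimage of $v$ is an infinite subset of the single vertical section $\{v\}\times\omega$. Next I would fix an even $v\neq\eta$ and produce an open neighbourhood of $F_\eta$ missing $v$: since $v$ is an isolated point of $X$, the set $U:=X\setminus\{v\}$ is open, and it contains $F_\eta$ because $F_\eta\subseteq\{\eta,\omega\}$ and $v\notin\{\eta,\omega\}$. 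As $(x_n:n\in\omega\setminus S)$ is $\mathscr{F}$-convergent to $\eta$, the set $\{n\in\omega\setminus S: x_n\notin U\}=\{n\in\omega\setminus S: x_n=v\}$ is finite. Translating through $\pi$ and using the previous observation, all but finitely many pairs $(v,m)$ with $m$ even must belong to $B$; therefore $B_v$ contains all but finitely many even integers, so $B_v$ is infinite, i.e. $v\in K_0$.

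To conclude, I would note that there are infinitely many even integers $v\neq\eta$ (at most one even integer can equal $\eta$, and the case $\eta=\omega$ excludes none), so the previous step forces infinitely many sections $B_v$ to be infinite, i.e. $K_0$ is infinite. This contradicts $B\in\mathrm{Fin}\times\mathrm{Fin}$, and since $\eta\in X$ was arbitrary we obtain $\mathsf{Lim}_{\bm{x}}(\mathcal{I}^\star,\mathscr{F})=\emptyset$. I expect the only genuinely delicate point to be the translation in the second step, namely verifying that $\mathscr{F}$-convergence to $\eta$ really forces each value $v\neq\eta$ to be attained only finitely often off $S$; this is exactly where the isolated points of $X$ are used, supplying the neighbourhoods $X\setminus\{v\}$ of $F_\eta$, and where one must handle the cases $\eta\in\omega$ and $\eta=\omega$ uniformly. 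Everything after that is routine bookkeeping on the vertical sections of $B$.
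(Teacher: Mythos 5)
Your proof is correct and follows essentially the same route as the paper's: both arguments rest on the observation that the even rows of column $v$ give infinitely many indices $n$ with $x_n=v$, and that the open set $X\setminus\{v\}$ (or a basic neighborhood of $\omega$ avoiding $v$) contains $F_\eta$ whenever $v\neq\eta$. The only difference is organizational: the paper fixes a single column $k$ outside the finite exceptional set of $\pi[S]$ and contradicts the $\mathscr{F}$-convergence directly, whereas you contradict $S\in\mathcal{I}$ by showing infinitely many columns of $\pi[S]$ would have to be infinite; both are valid.
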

\begin{proof}
Towards a contradiction, suppose that there exists $\eta\in X$ with
$(\mathcal I^\star,\mathscr F)$-$\lim_n x_n=\eta$. Then there exists $S\in\mathcal I$ such that the subsequence
$(x_n:n\in \omega\setminus S)$ is $\mathscr F$-convergent to $\eta$.
Set $A:=\omega\setminus S$ and $S':=\pi[S]\in \mathrm{Fin}\times\mathrm{Fin}$.
By definition of $\mathrm{Fin}\times\mathrm{Fin}$, the set
$
E:=\{k\in\omega: S'_k\notin\mathrm{Fin}\}
$ 
is finite. Hence, for every $k\in\omega\setminus E$, the set $S'_k$ is finite and thus $A'_k:=\pi[A]_k=\omega\setminus S'_k$ is cofinite.
In particular, for each $k\in\omega\setminus E$, the set $A'_k$ contains infinitely many even integers, and therefore 
$\bigl|\{n\in A: x_n=k\}\bigr|=\infty$.

If $\eta=\omega$, fix $k\in\omega\setminus E$ and consider the open neighborhood
$U:=\{\omega\}\cup (\omega\setminus [0,k])$ of $\omega$; then $F_\omega\subseteq U$ and $k\notin U$.
Since $\{n\in A:x_n=k\}$ is infinite, we get $\{n\in A:x_n\notin U\}$ infinite, contradicting $\mathscr F$-convergence to $\omega$.

If $\eta=k_0\in\omega$, pick $k\in\omega\setminus(E\cup\{k_0\})$ and let 
$U:=\{k_0,\omega\}\cup (\omega\setminus [0,k])$.  
Then $U$ is an open set containing $F_{k_0}=\{k_0,\omega\}$, while $k\notin U$.
In the previous case, $\{n\in A:x_n=k\}$ is infinite, so $\{n\in A:x_n\notin U\}$ is infinite, contradicting $\mathscr F$-convergence to $k_0$.
\end{proof}

\begin{claim}\label{claim2:Lim}
$\mathsf{Lim}_{\bm{x}}(\mathcal I,\mathscr F)=X$.
\end{claim}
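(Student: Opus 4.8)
The plan is to reduce the whole statement to a single fact about the value-fibers of $\bm{x}$, namely that $\{n\in\omega: x_n=v\}\in\mathcal{I}$ for every $v\in\omega$. Granting this, the claim follows almost immediately. Since $\omega\in F_\eta$ for every $\eta\in X$ (indeed $F_\omega=\{\omega\}$ and $F_k=\{k,\omega\}$), any open set $U$ containing $F_\eta$ must contain $\omega$, hence a basic neighborhood $\{\omega\}\cup(\omega\setminus[0,N])$ of $\omega$, so that $X\setminus U$ is a finite subset of $\omega$. Then $\{n\in\omega: x_n\notin U\}=\bigcup_{v\in X\setminus U}\{n\in\omega: x_n=v\}$ is a finite union of sets in $\mathcal{I}$, hence lies in $\mathcal{I}$. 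By the arbitrariness of $U$ this gives $(\mathcal{I},\mathscr{F})\text{-}\lim_n x_n=\eta$, and since $\eta\in X$ was arbitrary we conclude $\mathsf{Lim}_{\bm{x}}(\mathcal{I},\mathscr{F})=X$.

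To establish the fiber fact, I would pass through $\pi$ and describe $P_v:=\pi[\{n: x_n=v\}]\subseteq\omega\times\omega$ explicitly from the definition of $\bm{x}$. Unwinding the two cases ($m$ even gives value $k$, $m$ odd gives value $m$) yields $P_v=\{(v,m): m\text{ even}\}$ when $v$ is even, and $P_v=\{(v,m): m\text{ even}\}\cup\{(k,v): k\in\omega\}$ when $v$ is odd. In either case I would compute the vertical sections $(P_v)_k=\{m:(k,m)\in P_v\}$ and observe that $(P_v)_k$ is infinite only for $k=v$: for $k\neq v$ the section is either empty (when $v$ is even) or the singleton $\{v\}$ (when $v$ is odd), both finite. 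Hence $\{k\in\omega:(P_v)_k\notin\mathrm{Fin}\}=\{v\}$ is finite, so $P_v\in\mathrm{Fin}\times\mathrm{Fin}$ and therefore $\{n: x_n=v\}\in\mathcal{I}$ by the definition of $\mathcal{I}$ as the copy of $\mathrm{Fin}\times\mathrm{Fin}$.

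The only mildly delicate point is the odd case, where the fiber $P_v$ contains a whole row $\{(k,v):k\in\omega\}$ in addition to the even part of column $v$; here one must check that spreading the value $v$ across all columns does not create a second infinite section, which it does not, because each such column contributes only the single index $v$. Everything else is bookkeeping, and no further use of compactness, regularity, or the structure of $X$ beyond the elementary observation $\omega\in F_\eta$ is needed.
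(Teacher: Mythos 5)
Your proof is correct and follows essentially the same route as the paper: both arguments reduce to showing that each fiber $\{n\in\omega: x_n=v\}$ maps under $\pi$ to a set with only one infinite column (column $v$, via the even $m$'s) and at most a singleton in every other column (the point $(k,v)$ when $v$ is odd), hence lies in $\mathcal{I}$, and then both use that any open $U\supseteq F_\eta$ contains a basic neighborhood of $\omega$, so that $\{n: x_n\notin U\}$ is a finite union of such fibers. Your version merely unifies the paper's two cases ($\eta=\omega$ versus $\eta=k_0\in\omega$) by noting $\omega\in F_\eta$ for all $\eta$, which is a slight streamlining but not a different argument.
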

\begin{proof}
Fix $\eta\in X$ and let $U\subseteq X$ be an open set containing $F_\eta$. We need to show that $\{n \in \omega: x_n \notin U\} \in \mathcal{I}$. 

First, suppose that $\eta=\omega$. Suppose also without loss of generality that $U$ is a basic open neighborhood of $\omega$, so that there exists $t \in \omega$ such that $U=\{\omega\}\cup (\omega\setminus [0,t))$. For each $j \in \omega \cap [0,t)$, define $B_j:=\{n \in \omega: x_n=j\}$. Observe that $\pi[B_j]$ has infinitely many points only in column $j$ (since $(j,2i)\in\pi[B_j]$ for all $i$),
while for $k\neq j$ we have $|\pi[B_j]_k|\le 1$ (the only possible point is $(k,j)$, and this occurs only if $j$ is odd).
Hence $\pi[B_j]\in \mathrm{Fin}\times\mathrm{Fin}$ and so $B_j\in\mathcal I$. It follows that 
$
\{n \in \omega: x_n \notin U\}=\bigcup_{j<t}B_j \in \mathcal{I}.
$ 

Next, suppose that $\eta=k_0\in\omega$, then $U$ is a neighborhood of $\omega$ and contains $k_0$; hence there exists $t\in\omega$ with
$\{\omega\} \cup (\omega\setminus [0,t)) \subseteq U$ and $k_0\in U$. With the same notation as in the previous case, we conclude that $\{n \in \omega: x_n \notin U\}\subseteq \bigcup_{j<t, j\neq k_0}B_j \in \mathcal{I}$. 
\end{proof}

\begin{claim}\label{claim3:Lambda}
$\Lambda_{\bm{x}}(\mathcal I,\mathscr F)=X$.
\end{claim}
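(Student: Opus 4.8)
The plan is to reduce \textsc{Claim}~\ref{claim3:Lambda} to the classical (degenerate) setting through Proposition~\ref{prop:basicIlimitpoints}. The crucial structural feature of $\mathscr{F}$ in this construction is that $\omega \in F_\eta$ for \emph{every} $\eta \in X$, since $F_\omega=\{\omega\}$ and $F_k=\{k,\omega\}$ for $k\in\omega$. Hence, once we know that $\omega$ is a classical $\mathcal{I}$-limit point of $\bm{x}$, that is, $\omega \in \Lambda_{\bm{x}}(\mathcal{I})$, we obtain $\omega \in F_\eta \cap \Lambda_{\bm{x}}(\mathcal{I})$ for all $\eta \in X$, so that $\{\eta \in X: F_\eta \cap \Lambda_{\bm{x}}(\mathcal{I}) \neq \emptyset\}=X$. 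Proposition~\ref{prop:basicIlimitpoints} then yields $X \subseteq \Lambda_{\bm{x}}(\mathcal{I}, \mathscr{F})$, and the reverse inclusion is trivial. So everything reduces to exhibiting a single $\mathcal{I}$-positive set along which $\bm{x}$ converges to $\omega$.

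To this aim, I would let $S\subseteq \omega$ be defined by $\pi[S]=\{(k,m): m \text{ is odd and } m\ge 2k+1\}$. First I would check that $S \in \mathcal{I}^+$: for each $k\in\omega$ the section $\{m\in\omega:(k,m)\in\pi[S]\}$ consists of all odd integers $\ge 2k+1$, hence is infinite, so $\{k\in\omega:\{m:(k,m)\in\pi[S]\}\notin\mathrm{Fin}\}=\omega$ is infinite. By definition of $\mathrm{Fin}\times\mathrm{Fin}$, this means $\pi[S]\notin \mathrm{Fin}\times\mathrm{Fin}$, that is, $S \in \mathcal{I}^+$.

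Next I would verify that the subsequence $(x_n: n\in S)$ converges to $\omega$ in $X$. For $n\in S$ with $\pi(n)=(k,m)$ the coordinate $m$ is odd, so $x_n=m$ by definition of $\bm{x}$. Fixing $t\in\omega$ and the basic neighborhood $U_t:=\{\omega\}\cup(\omega\setminus[0,t])$ of $\omega$, we have $x_n\notin U_t$ exactly when $x_n\le t$, i.e.\ when $2k+1\le m\le t$; there are only finitely many such pairs $(k,m)$, hence, as $\pi$ is a bijection, only finitely many such $n\in S$. Thus $\{n\in S: x_n\notin U_t\}$ is finite for every $t\in\omega$, which is precisely convergence to $\omega$. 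This gives $\omega\in\Lambda_{\bm{x}}(\mathcal{I})$ and, combined with the reduction above, proves $\Lambda_{\bm{x}}(\mathcal{I},\mathscr{F})=X$.

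I expect the \emph{main obstacle} to be the construction of $S$ itself: $\mathcal{I}$-positivity forces $\pi[S]$ to contain infinitely many infinite columns, whereas convergence to $\omega$ forbids any fixed finite value from being attained infinitely often along $S$. These requirements pull in opposite directions, because on a full column the even-$m$ points all carry the constant value $x_n=k$, and taking whole columns would therefore prevent convergence to $\omega$. The resolution is to feed $S$ only from the odd-$m$ branch, where $x_n=m$ genuinely varies, and to impose the threshold $m\ge 2k+1$ so that each column remains infinite yet every bounded value is used at most finitely often across all columns simultaneously.
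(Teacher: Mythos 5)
Your proof is correct and takes essentially the same route as the paper's: the paper defines $T$ with $\pi[T]=\{(k,m): m \text{ odd and } m\ge k\}$, checks $T\in\mathcal{I}^+$, shows $(x_n:n\in T)$ converges to $\omega$, and then verifies $\mathscr{F}$-convergence to every $\eta$ directly from the fact that $\omega\in F_\eta$ --- which is precisely the argument you outsource to Proposition~\ref{prop:basicIlimitpoints}. Your threshold $m\ge 2k+1$ in place of $m\ge k$ is an immaterial variation.
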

\begin{proof}
Define the set $T:=\pi^{-1}[T']\subseteq \omega$, where 
\[
T':=\{(k,m)\in\omega\times\omega: m \text{ is odd and } m\ge k\}.
\]
Since $T'_k$ is infinite for every $k \in \omega$, it follows that 
$T'\notin \mathrm{Fin}\times\mathrm{Fin}$ and thus $T\in\mathcal I^+$. 
We claim that the subsequence $(x_n:n\in T)$ converges (in the usual sense) to $\omega$. 
Indeed, fix $q\in\omega$. If $n\in T$ and $x_n<q$, then $\pi(n)=(k,m)\in T'$ and $x_n=m$,
so $m<q$ is odd and $k\le m<q$. Hence there are only finitely many possibilities for $(k,m)$, and therefore
$\{n\in T:x_n<q\}$ is finite. 

Now fix $\eta\in X$ and let $U\subseteq X$ be an open set containing $F_\eta$.
Since $\omega\in F_\eta$, $U$ is a neighborhood of $\omega$, so $\{\omega\}\cup (\omega\setminus [0,q))\subseteq U$ for some $q \in \omega$.
By the previous observation, we obtain that $\{n\in T: x_n\notin U\}\subseteq \{n\in T:x_n<q\}\in \mathrm{Fin}$.
Therefore $(x_n:n\in T)$ is $\mathscr F$-convergent to $\eta$. Since $T\in\mathcal I^+$, this proves that $\eta\in \Lambda_{\bm{x}}(\mathcal I,\mathscr F)$.
\end{proof}

The conclusion follows putting together Claims \ref{claim1:Lim}--\ref{claim3:Lambda}, together with the trivial inclusion $\mathsf{Lim}_{\bm{x}}(\mathcal{I}, \mathscr{F})
\subseteq \Gamma_{\bm{x}}(\mathcal{I}, \mathscr{F})$, so that also $\Gamma_{\bm{x}}(\mathcal{I}, \mathscr{F})=X$.
\end{proof}

\begin{rmk}\label{rmk:hattaucontinuous}
The same construction above satisfies also the following two additional properties: (i) there exist a point $\eta \in X$ and a sequence $\bm{y} \in X^\omega$ such that $\lim_k y_k=\eta$ and $y_k \notin F_\eta$ for all $k \in \omega$; and (ii) the map $\eta \mapsto F_\eta$ (denoted by $h$) 
is $\widehat{\tau}$-continuous. 

To show (i), it is enough to consider the sequence $\bm{y}$ defined by $y_k:=k$ for all $k \in \omega$ and $\eta=\omega$. To show (ii), pick an open set $U\subseteq X$ and consider the basic $\widehat{\tau}$-open set $\widehat{U}:=\{F \in \mathcal{H}(X): F\subseteq U\}$. We need to show that 
$S:=h^{-1}[\widehat{U}]=\{\eta \in X: F_\eta \subseteq U\}$ is open in $X$. If $F_\eta \subseteq U$ and $\eta =k \in \omega$ then $\eta$ is isolated, hence it is in the interior of $S$. If $F_\eta \subseteq U$ and $\eta=\omega$ then $\omega \in U$, hence there exists $n\in\omega$ such that $V:=\{\omega\}\cup(\omega\setminus[0,n])\subseteq U$. To conclude, it is enough to observe that $F_\lambda \subseteq U$ for all $\lambda \in V$: indeed, if $\gamma=\omega$ this is clear, while if $\gamma \in \omega\setminus [0,n]$ then $F_\gamma=\{\gamma,\omega\}\subseteq U$.
\end{rmk}

\medskip 

\begin{proof}
    [Proof of Theorem \ref{thm:closure}]
    First, let us show that $c^b(\mathcal{I}, \mathscr{F})$ is closed in $\ell_\infty$. To this aim, pick a sequence $(\bm{x}^{(m)}: m\in\omega)\in c^b(\mathcal{I}, \mathscr{F})^\omega$ converging to some $\bm{x} \in\ell_\infty$. For each $m\in\omega$, fix a real $\eta_m\in\mathsf{Lim}_{\bm{x}^{(m)}}(\mathcal I,\mathscr{F})$. 

    Hereafter, we will use the notation $d(A,B):=\inf\{|x-y|: x\in A, y \in B\}$ for all nonempty $A,B\subseteq \mathbb{R}$, and $d(y,B):=d(\{y\}, B)$. Since the sets $F_\eta$ are uniformly bounded by hypothesis, 
    it is possible to fix $r \in [0,\infty)$ such that $F_\eta\subseteq B_r(\eta)$ for all $\eta \in \mathbb{R}$. 

    \begin{claim}\label{claim:etambounded}
    The sequence $(\eta_m: m \in \omega)$ is bounded. 
    \end{claim}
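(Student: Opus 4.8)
The plan is to extract uniform bounds from two independent sources: the convergence of the $\bm{x}^{(m)}$ in $\ell_\infty$, and the inclusion $F_\eta\subseteq B_r(\eta)$ valid for all $\eta$. First I would record that a convergent sequence in a normed space is bounded, so that $M:=\sup_m\|\bm{x}^{(m)}\|_\infty<\infty$ and hence $|x^{(m)}_n|\le M$ for all $m,n\in\omega$. This pins the range of every $\bm{x}^{(m)}$ inside the fixed interval $[-M,M]$.

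Next, fix $m\in\omega$ and consider the open set $U_m:=\{x\in\mathbb{R}: d(x,F_{\eta_m})<1\}$, which contains $F_{\eta_m}$ (each point of $F_{\eta_m}$ has distance $0<1$ from $F_{\eta_m}$). Since $\eta_m\in\mathsf{Lim}_{\bm{x}^{(m)}}(\mathcal{I},\mathscr{F})$, Definition \ref{defi:mainroughness} yields $\{n\in\omega: x^{(m)}_n\notin U_m\}\in\mathcal{I}$; because $\omega\notin\mathcal{I}$, there is at least one index $n$ with $x^{(m)}_n\in U_m$, that is, $d(x^{(m)}_n,F_{\eta_m})<1$. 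By the definition of the infimum this lets me pick a point $y_m\in F_{\eta_m}$ with $|x^{(m)}_n-y_m|<1$, so that $|y_m|\le|x^{(m)}_n|+1\le M+1$. Thus $F_{\eta_m}$ always contains a point bounded uniformly in $m$.

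Finally I would transfer this bound from $F_{\eta_m}$ back to $\eta_m$. Both $\eta_m$ and $y_m$ lie in $F_{\eta_m}\subseteq B_r(\eta_m)$, whence $|\eta_m-y_m|\le r$ and therefore $|\eta_m|\le|y_m|+r\le M+r+1$. As this estimate does not depend on $m$, the sequence $(\eta_m: m\in\omega)$ is bounded, which is the claim.

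I do not anticipate a genuine obstacle: the only delicate point is manufacturing a single element of $F_{\eta_m}$ that is close to the uniformly bounded range of $\bm{x}^{(m)}$, and a fixed neighbourhood of $F_{\eta_m}$ of radius $1$ supplies exactly this through the $\mathcal{I}$-smallness of the complement. The built-in rough-family property $\eta_m\in F_{\eta_m}$, together with the uniform radius $r$ from the uniform boundedness hypothesis, then carries the bound to $\eta_m$ itself. (One could equally work with $\Gamma_{\bm{x}^{(m)}}(\mathcal{I},\mathscr{F})$ via the inclusion $\mathsf{Lim}_{\bm{x}^{(m)}}(\mathcal{I},\mathscr{F})\subseteq\Gamma_{\bm{x}^{(m)}}(\mathcal{I},\mathscr{F})$, but this is not needed.)
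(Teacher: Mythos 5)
Your proposal is correct and follows essentially the same route as the paper: the same auxiliary open set $U_m=\{x:d(x,F_{\eta_m})<1\}$, the same use of $\omega\notin\mathcal{I}$ to find an index $n_m$ with $x^{(m)}_{n_m}\in U_m$, and the same transfer of the bound via $F_{\eta_m}\subseteq B_r(\eta_m)$ and $\sup_m\|\bm{x}^{(m)}\|_\infty<\infty$. Your explicit introduction of the intermediate point $y_m\in F_{\eta_m}$ merely spells out a step the paper compresses into the estimate $|x^{(m)}_{n_m}-\eta_m|\le r+1$.
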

    \begin{proof}
    Fix $m\in \omega$ and define $U_m:=\{y \in \mathbb{R}: d(y,F_{\eta_{m}})<1\}$. Then $U_m$ is an open set containing $F_{\eta_m}$, hence $D_m:=\{n \in \omega: x^{(m)}_n \notin U_m\} \in \mathcal{I}$. In particular, it is possible to fix an integer $n_m \in \omega\setminus D_m$. Thus $d(x^{(m)}_{n_m}, F_{\eta_m})<1$.  It follows that 
    $$
    |\eta_m| \le |x^{(m)}_{n_m}|+|x^{(m)}_{n_m}-\eta_m| \le \|\bm{x}^{(m)}\|_\infty+r+1.
    $$
    The claim follows since $(\bm{x}^{(k)}: k\in\omega)$ converges, so that $\sup_k \|\bm{x}^{(k)}\|_\infty<\infty$. 
    \end{proof}

Thanks to Claim \ref{claim:etambounded} and the Heine--Borel property of $\mathbb{R}$, there exists a subsequence $(\eta_{m_j}: {j\in\omega})$
converging to some $\eta\in X$. 

\begin{claim}\label{claim:etalimitpoint}
    $\eta\in\mathsf{Lim}_{\bm{x}}(\mathcal I,\mathscr{F})$. 
\end{claim}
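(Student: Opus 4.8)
The plan is to verify directly the defining condition of $\mathsf{Lim}_{\bm{x}}(\mathcal{I},\mathscr{F})$ from Definition \ref{defi:mainroughness}: that $\{n\in\omega: x_n\notin U\}\in\mathcal{I}$ for every open set $U\supseteq F_\eta$. So I fix such a $U$. Since the rough family is uniformly bounded, $F_\eta$ is a closed and bounded, hence compact, subset of $\mathbb{R}$; a standard compactness argument then produces $\varepsilon>0$ with $N_\varepsilon(F_\eta):=\{y\in\mathbb{R}: d(y,F_\eta)<\varepsilon\}\subseteq U$. This reduces the task to controlling how far the terms $x_n$ can lie from $F_\eta$.

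The heart of the argument is to combine three layers of approximation at a single, suitably large index $j$. First, the $\widehat{\tau}$-continuity of $\eta\mapsto F_\eta$ together with $\eta_{m_j}\to\eta$ yields, for all large $j$, the inclusion $F_{\eta_{m_j}}\subseteq N_{\varepsilon/2}(F_\eta)$: indeed, $N_{\varepsilon/2}(F_\eta)$ is an open set containing $F_\eta$, so $F_\eta$ lies in the basic $\widehat{\tau}$-open set $\{F\in\mathcal{H}(X): F\subseteq N_{\varepsilon/2}(F_\eta)\}$, and continuity transfers this to $F_{\eta_{m_j}}$ eventually. Second, writing $\delta_m:=\|\bm{x}^{(m)}-\bm{x}\|_\infty$, the sup-norm convergence $\bm{x}^{(m)}\to\bm{x}$ gives $\delta_{m_j}\to 0$; hence I may fix one index $j$ for which both $F_{\eta_{m_j}}\subseteq N_{\varepsilon/2}(F_\eta)$ and $\delta_{m_j}<\varepsilon/4$ hold.

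For this fixed $j$, I set $W:=N_{\varepsilon/4}(F_{\eta_{m_j}})$, an open set containing $F_{\eta_{m_j}}$. The key estimate is that $x^{(m_j)}_n\in W$ forces $x_n\in U$: a chained triangle inequality (from $x^{(m_j)}_n$ to some $w\in F_{\eta_{m_j}}$ within $\varepsilon/4$, from $w$ to $F_\eta$ within $\varepsilon/2$, and from $x^{(m_j)}_n$ to $x_n$ within $\delta_{m_j}<\varepsilon/4$) gives $d(x_n,F_\eta)<\varepsilon$, so $x_n\in N_\varepsilon(F_\eta)\subseteq U$. Taking contrapositives yields $\{n: x_n\notin U\}\subseteq\{n: x^{(m_j)}_n\notin W\}$. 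Since $\eta_{m_j}\in\mathsf{Lim}_{\bm{x}^{(m_j)}}(\mathcal{I},\mathscr{F})$ and $W$ is an open set containing $F_{\eta_{m_j}}$, the right-hand set lies in $\mathcal{I}$, and therefore so does the left-hand one by downward closure of $\mathcal{I}$. As $U$ was an arbitrary open set containing $F_\eta$, this proves $\eta\in\mathsf{Lim}_{\bm{x}}(\mathcal{I},\mathscr{F})$.

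I expect the main obstacle to be the bookkeeping of the three radii $\varepsilon,\varepsilon/2,\varepsilon/4$ so that they genuinely compose. The crucial subtlety is that the $\widehat{\tau}$-continuity delivers only \emph{one-sided} (upper) control, placing $F_{\eta_{m_j}}$ inside a neighborhood of $F_\eta$ rather than full Hausdorff proximity; one must check that this suffices, and it does precisely because the inclusion we need runs in the direction ``$F_{\eta_{m_j}}$ sits inside a neighborhood of $F_\eta$,'' which is exactly what upper semicontinuity of the set map provides. The uniform boundedness hypothesis enters only to guarantee compactness of $F_\eta$, and hence the existence of the initial $\varepsilon$; without it the passage from $U$ to an $\varepsilon$-neighborhood of $F_\eta$ could fail.
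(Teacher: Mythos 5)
Your proposal is correct and follows essentially the same route as the paper's proof: extract $\varepsilon>0$ from the compactness of $F_\eta$, use $\widehat{\tau}$-continuity and sup-norm convergence to fix one good index $j$, and then transfer the ideal-smallness of an exceptional set for $\bm{x}^{(m_j)}$ to one for $\bm{x}$ via a triangle inequality. The only cosmetic difference is that the paper tests $\bm{x}^{(m_{j_0})}$ against the neighborhood $V=\{y: d(y,F_\eta)<\varepsilon/2\}$ of $F_\eta$ (which contains $F_{\eta_{m_{j_0}}}$), whereas you test it against a neighborhood of $F_{\eta_{m_j}}$ itself; both choices are valid and the radius bookkeeping checks out.
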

\begin{proof}
    Fix an open set $U\subseteq \mathbb{R}$ containing $F_\eta$. 
Since $F_\eta$ is closed and $F_\eta\subseteq B_r(\eta)$, then it is compact. 
Hence $\mathbb{R}\setminus U$ is closed and disjoint from $F_\eta$, and therefore
$$
\varepsilon:=d(F_\eta,X\setminus U)>0.
$$
Define the open set 
$
V:=
\{y\in X: d(y,F_\eta)<\varepsilon/2\},
$ 
so that $F_\eta\subseteq V\subseteq \overline V\subseteq U$. Since $\lim_j \eta_{m_j}=\eta$ and the map $\lambda\mapsto F_\lambda$ is $\widehat{\tau}$-continuous, it follows that there exists $j_0 \in \omega$ such that $F_{\eta_{m_j}}\subseteq V$ for all $j\ge j_0$. Without loss of generality, we can also suppose that $\|\bm{x}^{(m_j)}-\bm{x}\|_\infty<\varepsilon/2$ for all $j\ge j_0$.

Now, suppose that $n \in \omega$ is an integer such that $x_n \notin U$, so that $d(x_n,F_\eta) \ge d(X\setminus U, F_\eta) \ge \varepsilon$. Then necessarily $x_n^{(m_{j_0})} \notin V$: indeed, in the opposite, we would have 
$$
d(x_n,F_\eta) \le |x_n-x_n^{(m_{j_0})}|+d\left(x_n^{(m_{j_0})},F_\eta\right)\le \|\bm{x}^{(m_{j_0})}-\bm{x}\|_\infty+\frac{\varepsilon}{2}<\varepsilon,
$$
which is the required contradiction. 
Since $\eta_{m_{j_0}} \in \mathsf{Lim}_{\bm{x}^{({m_{j_0}})}}(\mathcal{I}, \mathscr{F})$, this implies that 
$$
\{n \in \omega: x_n \notin U\} \subseteq \{n \in \omega: x_n^{(m_{j_0})} \notin V\}\in \mathcal{I}.
$$
By the arbitrariness of $U$, we conclude that $\eta=(\mathcal{I}, \mathscr{F})\text{-}\lim_n x_n$. 
\end{proof}

Claim \ref{claim:etalimitpoint} proves that the sequence $\bm{x}$ (which is the limit of $(\bm{x}^{(k)}: k\in\omega)$ in $\ell_\infty$) satisfies $\mathsf{Lim}_{\bm{x}}(\mathcal I,\mathscr{F})\neq \emptyset$. Therefore $c^b(\mathcal{I}, \mathscr{F})$ is closed in $\ell_\infty$. 

\medskip

At this point, since $c^b(\mathcal{I}, \mathscr{F})$ is a closed set containing $c^b(\mathcal{I}^\star, \mathscr{F})$, then it contains also its closure. To complete the proof, it is sufficient to show the converse inclusion (which indeed holds for every rough family $\mathscr{F}$). 

To this aim, fix $\bm{y}\in c^b(\mathcal{I}, \mathscr{F})$ and pick $\eta\in\mathsf{Lim}_{\bm{y}}(\mathcal I,\mathscr{F})$. Fix also $k \in \omega$. We need to show that there exists $\bm{x} \in c^b(\mathcal{I}^\star, \mathscr{F})$ such that $\|\bm{x}-\bm{y}\|_\infty<2^{-k}$. In fact, define the open set $U:=\{z \in \mathbb{R}: d(z,F_\eta)<2^{-k}\}$. Then $U$ contains $F_\eta$ and, hence, $A:=\{n \in \omega: y_n \notin U\} \in \mathcal{I}$. Define the sequence $\bm{x} \in \ell_\infty$ by setting $x_n:=y_n$ for $n\in A$, and for $n\in\omega\setminus A$,
pick $x_n\in F_\eta$ such that $|x_n-y_n|<2^{-k}$. It follows by construction that $\|\bm{x}-\bm{y}\|_\infty<2^{-k}$ and $\eta \in \mathsf{Lim}_{\bm{x}}(\mathcal{I}^\star, \mathscr{F})$. This concludes the proof. 
\end{proof}

\medskip

\begin{proof}
    [Proof of Theorem \ref{tmm:uppersemicrimpliesGammaLimcont}]
    First, we show that $\Phi$ is $\widehat{\tau}$-continuous. 
    Fix a bounded real sequence $\bm{x} \in c^b(\mathcal{I}, \mathscr{F})$ and an open set $U\subseteq \mathbb{R}$ such that $\Phi(\bm{x})\subseteq U$. 
    We need to show that there exists $\varepsilon>0$ such that $\Phi(\bm{y})\subseteq U$ for all $\bm{y} \in c^b(\mathcal{I}, \mathscr{F})$ with $\|\bm{x}-\bm{y}\|_\infty<\varepsilon$. This is obvious if  $U=\mathbb{R}$, hence we suppose hereafter that $U^c:=\mathbb{R}\setminus U$ is nonempty. 

    Define the map $m_{\bm{x}}: \mathbb{R}\to \mathbb{R}$ by 
    \begin{equation}\label{eq:definitionmx}
    \forall \eta \in \mathbb{R}, \quad 
    m_{\bm{x}}(\eta):=\max\{|a-\eta|: a \in \Gamma_{\bm{x}}(\mathcal{I})\}-r(\eta)
    \end{equation}
    Observe that $\Gamma_{\bm{x}}(\mathcal{I})$ is nonempty compact, and the map $\eta \mapsto |a-\eta|$ is continuous for each $a$, hence $m_{\bm{x}}$ is well defined. In addition, the map $\eta \mapsto \max\{|a-\eta|: a \in \Gamma_{\bm{x}}(\mathcal{I})\}$ is continuous, see e.g. Berge's maximum theorem \cite[Theorem 17.31]{MR2378491}. Hence $m_{\bm{x}}$ is lower semicontinuous. It follows by \cite[Corollary 1.4]{LeonettiJCA} that 
    $$
    \Phi(\bm{x})=\{\eta \in \mathbb{R}: m_{\bm{x}}(\eta)\le 0\}.
    $$ 
    In particular, $m_{\bm{x}}(\eta)>0$ for all $\eta \in U^c$. 
    Now, define the compact set $K:=U^c \cap [-R,R]$, where $R:=1+\|\bm{x}\|_\infty+\sup_{\eta}r(\eta)$. Taking into account that a lower semicontinuous function attains its minimum on compact sets, it is possible to define the positive real $\alpha:=\min\{m_{\bm{x}}(\eta): \eta \in K\}>0$, provided that $K\neq \emptyset$. At this point, if $K=\emptyset$ it is enough to choose $\varepsilon:=\nicefrac{1}{2}$; while if $K\neq \emptyset$, we choose
    $$
    \varepsilon \in \left(0, \min\left\{1,\, \frac{\alpha}{2}\right\}\right).
    $$

    In fact, pick a bounded real sequence $\bm{y} \in c^b(\mathcal{I}, \mathscr{F})$ such that $\|\bm{x}-\bm{y}\|_\infty<\varepsilon$, and define the map $m_{\bm{y}}$ analogously as in \eqref{eq:definitionmx}. Now, fix $\eta \in U^c$. We claim 
    that 
    $m_{\bm{y}}(\eta)>0$. This follows considering the two cases below (case (ii) is considered only if $K\neq \emptyset$): 
    \begin{enumerate}[label={\rm (\roman{*})}]
        \item If $|\eta|>R$ then 
        \begin{displaymath}
            \begin{split}
                m_{\bm{y}}(\eta)
                &=\max\{|a-\eta|: a \in \Gamma_{\bm{y}}(\mathcal{I})\}-r(\eta)\\
                &\ge |\eta|-\|\bm{y}\|_\infty -r(\eta) 
                > R - (\|\bm{x}\|_\infty +1) - \sup_{z \in \mathbb{R}}r(z)=0.
            \end{split}
        \end{displaymath}

        \item Suppose now that $|\eta| \le R$ and $\eta \in U^c$ (that is, $\eta \in K$). Recall that the Hausdorff metric $d_{\mathsf{H}}$ on the family $\mathcal{K}(\mathbb{R})$ of nonempty compact subsets of $\mathbb{R}$ is defined by $d_{\mathsf{H}}(A,B):=\max\{\sup_{a \in A}d(a,B), \sup_{b \in B}d(A,b)\}$ for all $A,B \in \mathcal{K}(\mathbb{R})$. Observe also the elementary inequality 
        $$
        \forall A,B \in \mathcal{K}(\mathbb{R}), \forall z \in \mathbb{R}, \quad 
        \left|\sup_{b \in B}|b-z|-\sup_{a \in A}|a-z|\right|\le d_{\mathsf{H}}(A,B).
        $$
        Indeed, if $d_{\mathsf{H}}(A,B)\le \delta$, then every $b\in B$ is within $\delta$ of some $a\in A$, so $|b-z|\le |a-z|+\delta$, hence $\sup_B|b-z|\le \sup_A|a-z|+\delta$; the reverse inequality is symmetric. Taking into account also \cite[Theorem 3.1]{MR2177419}, we obtain that 
        \begin{displaymath}
                m_{\bm{y}}(\eta)\ge m_{\bm{x}}(\eta)-d_{\mathsf{H}}(\Gamma_{\bm{x}}(\mathcal{I}), \Gamma_{\bm{y}}(\mathcal{I}))>\alpha-2\varepsilon >0.
        \end{displaymath}
        
    \end{enumerate}

It follows that $\Phi(\bm{y})=\{\eta \in \mathbb{R}: m_{\bm{y}}(\eta)\le 0\}\subseteq U$. Therefore $\Phi$ is $\widehat{\tau}$-continuous. 

\medskip

The proof that also the map $\Psi$ is $\widehat{\tau}$-continuous proceeds along the same lines, once we replace $c^b(\mathcal{I}, \mathscr{F})$ with $\ell_\infty$, the function $m_{\bm{x}}$ in \eqref{eq:definitionmx} with 
$$
\forall \eta \in \mathbb{R}, \quad 
\tilde{m}_{\bm{x}}(\eta):=\min\{|a-\eta|: a \in \Gamma_{\bm{x}}(\mathcal{I})\}-r(\eta),
$$
and we note that $\Psi(\bm{x})=\{\eta \in \mathbb{R}: \tilde{m}_{\bm{x}}(\eta)\le 0\}$ thanks to Theorem \ref{thm:01thmclusterrough} (and similarly for the sequence $\bm{y}$); 
we omit further details. 
\end{proof}

\medskip

\begin{proof}
[Proof of Theorem \ref{thm:characterizationPplus}]  
\textsc{If part.}
Suppose that $\mathcal{I}$ is not a $P^+$-ideal. 
We need to show that there exists a sequence $\bm{x} \in X^\omega$ such that $\Lambda_{\bm{x}}(\mathcal{I},\mathscr{F})\neq \Gamma_{\bm{x}}(\mathcal{I},\mathscr{F})$.

Since $\mathcal{I}$ is not $P^+$, there exists a decreasing sequence $(A_k:k\in\omega)$ in $\mathcal{I}^+$ such that no $A\in\mathcal{I}^+$ satisfies $A\setminus A_k\in\mathrm{Fin}$ for all $k$. We can assume without loss of generality that $A_0=\omega$ and the sequence $\bm{y}$ is injective. 
%Also, passing to a subsequence if needed, we can assume that $d(y_{k+1},\eta)<d(y_k,\eta)$ for all $k \in \omega$, where $d$ is a compatible metric on $X$. 
Define $\bm{x} \in X^\omega$ by $x_n:=y_k$ for all $n,k \in \omega$ with $n \in A_k\setminus A_{k+1}$, and $x_n=\eta$ for all $n \in 
%A_\infty:=
\bigcap_t A_t$. 
%Note that $A_\infty \in \mathcal{I}$. 
Then we claim that
$$
\eta \in \Gamma_{\bm{x}}(\mathcal{I},\mathscr{F})\setminus \Lambda_{\bm{x}}(\mathcal{I},\mathscr{F}).
$$
In fact, if $U$ is an open set containing $F_\eta$, then there exists $k_0 \in \omega$ with $y_{k} \in U$ for all $k\ge k_0$, so that $A_{k_0}\subseteq \{n \in \omega: x_n \in U\}$. Hence the latter set is $\mathcal{I}$-positive. Thus $\eta \in \Gamma_{\bm{x}}(\mathcal{I},\mathscr{F})$.  
At the same time, pick an infinite set $A\subseteq \omega$ such that $(x_n: n \in A)$ is $\mathscr{F}$-convergent to $\eta$. 
Observe that 
the set 
$
U_k:=X\setminus \{y_0,\dots,y_{k-1}\}
$
is open for each $k \in \omega$. Moreover, by the standing hypothesis, $F_{\eta}\subseteq U_k$, so that $\{n \in A: x_n\notin U_k\} \in \mathrm{Fin}$ for each $k \in \omega$. Since $A_0=\omega$ and by construction $A_k=\{n \in \omega: x_n \in U_k\}$, it follows that 
$
A\setminus A_k=A\setminus \{n \in \omega: x_n \in U_k\} \in \mathrm{Fin}
$ 
for all $k \in \omega$. 
Hence $A \in \mathcal{I}$, so that $\eta \notin \Lambda_{\bm{x}}(\mathcal{I},\mathscr{F})$. 

\medskip

\textsc{Only If part.}
Suppose that $\mathcal{I}$ is a $P^+$-ideal and pick a sequence $\bm{x}\in X^\omega$. 
We need to show that $\Gamma_{\bm{x}}(\mathcal{I},\mathscr{F})\subseteq \Lambda_{\bm{x}}(\mathcal{I},\mathscr{F})$ (since the converse is obvious). 

The claim is clear if $\Gamma_{\bm{x}}(\mathcal{I},\mathscr{F})=\emptyset$. Otherwise, fix $\lambda \in \Gamma_{\bm{x}}(\mathcal{I},\mathscr{F})$ and define the sets $U_k:=\{y \in X: d(y,F_\lambda)<2^{-k}\}$ and $A_k:=\{n \in \omega: x_n\in U_k\}$ for all $k \in \omega$. Then each $U_k$ is an open set containing $F_\lambda$ and, since $\lambda$ is an $(\mathcal{I},\mathscr{F})$-cluster point, $(A_k: k \in \omega)$ is a decreasing sequence in $\mathcal{I}^+$. 
By the $P^+$-property, there exists $A\in\mathcal{I}^+$ such that $A\setminus A_k$ is finite for all $k$. It is enough to show that the subsequence $(x_n: n\in A)$ is $\mathscr{F}$-convergent to $\lambda$. Indeed, suppose that $U\subseteq X$ an open set containing $F_\lambda$. Since $X$ is a compact metric space, it has the \textsc{UC}-property, hence the disjoint closed sets $F_\lambda$ and $X\setminus U$ are at a positive distance apart. So there exists $k \in \omega$ such that $F_\lambda \subseteq U_k\subseteq U$. It follows that 
\[
\{n\in A: x_n\notin U\}\subseteq \{n\in A: x_n\notin U_k\}=A\setminus A_k\in\mathrm{Fin}.
\]
Therefore $\lambda \in \Lambda_{\bm{x}}(\mathcal{I},\mathscr{F})$. 
\end{proof}

%\nocite{*}
\bibliographystyle{amsplain}
%\bibliography{ideale}

\end{document}